\setlist[enumerate]{leftmargin=7mm,topsep=0pt,itemsep=-1ex,partopsep=1ex,parsep=1ex,label=\rm{(\roman*)}}
\setlist[itemize]{leftmargin=5mm,topsep=0pt,itemsep=-1ex,partopsep=1ex,parsep=1ex,label=\raisebox{0.25ex}{\tiny$\bullet$}}
\theoremstyle{plain}
\newtheorem{theorem}{Theorem}[section]
\newtheorem*{theoremaux}{Theorem \theoremauxnum}
\gdef\theoremauxnum{1}
\newtheorem*{main-theorem}{Main Theorem}
\newtheorem{proposition}[theorem]{Proposition}
\newtheorem*{propositionaux}{Proposition \propositionauxnum}
\gdef\propositionauxnum{1}
\newtheorem{lemma}[theorem]{Lemma}
\newtheorem*{lemmaaux}{Lemma \lemmaauxnum}
\gdef\lemmaauxnum{1}
\newtheorem{question}[theorem]{Question}
\newtheorem*{questionaux}{Question \questionauxnum}
\gdef\questionauxnum{1}
\newtheorem{corollary}[theorem]{Corollary}
\theoremstyle{definition}
\newtheorem{definition}[theorem]{Definition}
\newtheorem{example}[theorem]{Example}
\theoremstyle{remark}
\newtheorem{remark}[theorem]{Remark}
\newcommand{\incl}[1][r]{\ar@<-0.2pc>@{^(-}[#1] \ar@<+0.2pc>@{-}[#1]}
\newcommand{\hs}{\kern 0.8pt}
\newcommand{\X}{\mathbb{X}}
\newcommand{\Hom}{\mathrm{Hom}}
\newcommand{\Spec}{\mathrm{Spec}}
\newcommand{\Stab}{\mathrm{Stab}}
\newcommand{\inn}{\mathrm{inn}}
\newcommand{\Lie}{\mathrm{Lie}}
\renewcommand{\SS}{\mathcal{S}}
\renewcommand{\sl}{\mathfrak{sl}}
\newcommand{\so}{\mathfrak{so}}
\renewcommand{\sp}{\mathfrak{sp}}
\newcommand{\e}{\mathfrak{e}}
\newcommand{\f}{\mathfrak{f}}
\newcommand{\g}{\mathfrak{g}}
\newcommand{\lt}{\mathfrak{t}}
\newcommand{\sld}{\mathfrak{sl}_2}
\renewcommand{\l}{\mathfrak{l}}
\newcommand{\z}{\mathfrak{z}}
\newcommand{\RR}{\mathcal{R}}
\newcommand{\gr}{\mathrm{gr}}
\newcommand{\Dyn}{\mathrm{Dyn}}
\renewcommand{\H}{\mathrm{H}}
\newcommand{\A}{\mathbb{A}}
\newcommand{\iso}{\simeq}
\newcommand{\T}{\mathbb{T}}
\newcommand{\C}{\mathbb{C}}
\newcommand{\N}{\mathbb{N}}
\newcommand{\NN}{\mathcal{N}}
\newcommand{\Z}{\mathbb{Z}}
\newcommand{\R}{\mathbb{R}}
\newcommand{\G}{\mathbb{G}}
\renewcommand{\O}{\mathcal{O}}
\newcommand{\codim}{\mathrm{codim}}
\DeclareMathOperator{\Card}{Card}
\DeclareMathOperator{\SL}{SL}
\DeclareMathOperator{\Aut}{Aut}
\DeclareMathOperator{\Gal}{Gal}
\def\ga{\,^\gamma\hskip-1pt}
\title[Real structures on nilpotent orbit closures]{Real structures on nilpotent orbit closures}
\author[Michael Bulois, Lucy Moser-Jauslin, and Ronan Terpereau]{Michael Bulois, Lucy Moser-Jauslin, and Ronan Terpereau}
\thanks{The third-named author is supported by the ANR Project FIBALGA ANR-18-CE40-0003-01.
This work received partial support from the French "Investissements d\textquoteright Avenir" program and from project ISITE-BFC (contract ANR-lS-IDEX-OOOB). The IMB receives support from  the EIPHI Graduate School (contract ANR-17-EURE-0002). Partial support was also received from the program "Investissements d'Avenir" Labex MILYON (contract ANR-10-LABX-0070).
}
\address{Institut Camille Jordan, UMR 5208 CNRS,
Univ Lyon, Universit\'e Jean Monnet, 42023 Saint-Etienne, France}
\email{michael.bulois@univ-st-etienne.fr}
\address{Institut de Math\'{e}matiques de Bourgogne, UMR 5584 CNRS, Universit\'{e} Bourgogne Franche-Comt\'{e}, F-21000 Dijon, France}
\email{lucy.moser-jauslin@u-bourgogne.fr}
\address{Institut de Math\'{e}matiques de Bourgogne, UMR 5584 CNRS, Universit\'{e} Bourgogne Franche-Comt\'{e}, F-21000 Dijon, France}
\email{ronan.terpereau@u-bourgogne.fr}
\subjclass[2020]{%
14R20,  	
14M17,  	
14P99,     
11S25  	
20G20}  	
\begin{document}

\begin{abstract}
We determine the equivariant real structures on nilpotent orbits and the normalizations of their closures for the adjoint action of a complex semisimple algebraic group on its Lie algebra.
\end{abstract}

\maketitle
\vspace{-5mm}
\tableofcontents

\vspace{-8mm}
\section{Introduction}
Let $G$ be a complex semisimple algebraic group, and let $\g$ be its Lie algebra.
Let $G_0$ be a \emph{real form} of $G$, that is, a real algebraic group such that $G_0 \times_{\Spec(\R)} \Spec(\C) \simeq G$ as complex algebraic groups.
Given a complex quasi-projective $G$-variety $X$, we say that a real $G_0$-variety $X_0$ is an \emph{$(\R,G_0)$-form} of $X$ if $X_0 \times_{\Spec(\R)} \Spec(\C) \simeq X$ as complex $G$-varieties. Let us note that certain $G$-varieties admit no $(\R,G_0)$-form. On the other hand, when an $(\R,G_0)$-form exists, there may be several (and even uncountably many) which are pairwise non-isomorphic. 

\smallskip

The goal of this article is to address the following basic question.

\begin{question}\label{Q1}
What are the $(\R,G_0)$-forms of the nilpotent orbit closures in $\g$ and of their normalizations?
\end{question}

To give a real form $G_0$ of $G$ is equivalent to giving a \emph{real group structure} $\sigma$ on $G$, that is, an antiregular involution on $G$ compatible with the group structure, and giving an $(\R,G_0)$-form of $X$ is equivalent to giving a \emph{$(G,\sigma)$-equivariant real structure} $\mu$ on $X$, that is, an antiregular involution on $X$ such that 
\begin{equation}\label{eq:def eq real structure} \tag{$\star$}
\forall g \in G, \ \forall x \in X,\ \ \mu(g \cdot x)=\sigma(g) \cdot \mu(x). 
\end{equation}
Moreover, two $(\R,G_0)$-forms are isomorphic if and only if the two corresponding $(G,\sigma)$-equivariant real structures are \emph{equivalent}, i.e.~conjugate by a $G$-equivariant automorphism of $X$. 
(See Sections \ref{sec:real group structures} and \ref{sec:equiv real structures} for more details on these notions.)

\smallskip

With this terminology, Question~\ref{Q1} can be rephrased as follows.

\begin{question}\label{Q2}
What are the $(G,\sigma)$-equivariant real structures on the nilpotent orbit closures in $\g$  and on their normalizations?
\end{question}

If $X$ is an \emph{almost homogeneous} $G$-variety, that is, if $X$ contains a dense open $G$-orbit $U$, then the condition~\eqref{eq:def eq real structure} implies that $\mu(U)=U$, and thus $\mu_{|U}$ is a $(G,\sigma)$-equivariant real structure on $U$. 
Therefore, a natural strategy to determine the equivariant real structures on nilpotent orbit closures and their normalizations is in two steps. First, we determine the equivariant real structures on nilpotent orbits. Second, we determine which ones extend to their closures and their normalizations. 

\smallskip

Denoting by $d\sigma_e\colon \g \to \g$ the differential of $\sigma\colon G \to G$ at the identity element, we check (see Example~\ref{ex:dsigma is an eq real structure}) that $d\sigma_e$ is a $(G,\sigma)$-equivariant real structure on $\g$, viewed as a $G$-variety for the adjoint action. In particular, if $\O$ is a nilpotent orbit in $\g$, then $d\sigma_e$ induces a $(G,\sigma)$-equivariant real structure on $\O$ if and only if $d\sigma_e(\O)=\O$.   
However, there are also equivariant real structures on nilpotent orbits that are not obtained by differentiating a real group structure on $G$ (Example \ref{ex:non-gloabl1}), nor even by restricting an equivariant real structure from the Lie algebra $\g$ (Example \ref{ex:non-gloabl2}).

\begin{example}\label{ex:non-gloabl1}
Let $\g$ be a semisimple Lie algebra, and assume that $d\sigma_e(\O)=\O$. Let $\theta \in \R$.
Then $\mu_\theta \colon \O \to \O,\ v \mapsto e^{i\theta} d\sigma_e(v)$ is a $(G,\sigma)$-equivariant real structure on $\O$ which is not obtained by differentiating a real group structure on $G$ when $\theta \notin 2\pi \Z$ (because, in this case, $\mu_\theta$ does not preserve the Lie bracket).
\end{example}

\begin{example}\label{ex:non-gloabl2}
Let $G=\SL_3(\C)$ with $\sigma(g)=\overline{g}$ for all $g \in G$ (here $\overline{g}$ denotes the complex conjugate of $g$), and let $\O_{reg}$ be the \emph{regular} nilpotent orbit in $\sl_3$ (i.e.~the unique nilpotent orbit whose closure contains all the other nilpotent orbits).
Then the map $\mu$ defined by \small 
\[
\O_{reg} \to \O_{reg},\ g \cdot \begin{bmatrix}
0 & 1 & 0 \\ 0 & 0 & 1\\ 0 &0&0
\end{bmatrix} \mapsto \left( \sigma(g)  \begin{bmatrix}
1 & i & 0 \\ 0 & 1 & 0\\ 0 &0&1
\end{bmatrix} \right) \cdot \begin{bmatrix}
0 & 1 & 0 \\ 0 & 0 & 1\\ 0 &0&0
\end{bmatrix} =\sigma(g) \cdot \begin{bmatrix}
0 & 1 & i \\ 0 & 0 & 1\\ 0 &0&0
\end{bmatrix}  
\] \normalsize
is a $(G,\sigma)$-equivariant real structure on $\O_{reg}$ that does not extend to a $(G,\sigma)$-equivariant real structure on $\sl_3$ (see Section \ref{sec:example} for a proof of this claim).
\end{example}

The following theorem provides a complete answer to the second part of Question~\ref{Q2}.
More precisely, we determine the equivariant structures on the nilpotent orbits and the normalizations of their closures. In the cases where the closure is itself not normal, it remains an open question to determine all the equivariant real structures. This will be discussed in Section \ref{sec:extension}.

\begin{main-theorem}
Let $G$ be a complex semisimple algebraic group endowed with a real group structure $\sigma$. Let $\g$ be the Lie algebra of $G$, let $\O$ be a nilpotent orbit in $\g$, let $\overline{\O}$ be the closure of $\O$ in $\g$, and let $\widetilde{\O}$ be the normalization of $\overline{\O}$.
Then the following are equivalent:
\begin{enumerate}[$(a)$]
\item\label{item:a} $\O$ is $d\sigma_e$-stable (i.e.~$d\sigma_e(\O)=\O$); 
\item\label{item:b} $\O$ admits a $(G,\sigma)$-equivariant real structure;
\item\label{item:c} $\widetilde{\O}$ admits a $(G,\sigma)$-equivariant real structure; and
\item\label{item:d} $\sigma_D(w(\O))=w(\O)$,
\end{enumerate}
where 
\begin{itemize}
\item $\sigma_D$ is the Dynkin diagram automorphism of $\Dyn(G)$ induced by $\sigma$ (see Definition \ref{def: star-action}); and
\item $w(\O)$ is the weighted Dynkin diagram associated to $\O$ (see \cite[Section 3.5]{CM93}).
\end{itemize}
Moreover, if these equivalent assertions hold, then all $(G,\sigma)$-equivariant real structures on $\O$, resp.~on $\widetilde{\O}$, are equivalent. 
\end{main-theorem}

The Main Theorem will be proved in several steps as follows: 
\begin{itemize}
\item \ref{item:a} $\Leftrightarrow$ \ref{item:b} is Proposition \ref{prop:first part}.
\item \ref{item:b} $\Leftrightarrow$ \ref{item:c} is the first part of Proposition \ref{prop:third part} (see also Section \ref{subsubsec: 2.2.2}).
\item \ref{item:a} $\Leftrightarrow$ \ref{item:d} is Corollary \ref{cor: part_zero}.
\item If the equivalent conditions \ref{item:a}-\ref{item:d} of the Main Theorem hold, then the fact that all $(G,\sigma)$-equivariant real structures on $\O$ are equivalent is Theorem \ref{th:second part}, and the fact that  all $(G,\sigma)$-equivariant real structures on $\widetilde{\O}$ are equivalent is the second part of Proposition \ref{prop:third part}.
\end{itemize} 

\bigskip

Let us make a few comments related to Question~\ref{Q2} and our Main Theorem:
\smallskip
\begin{enumerate}[(1)]
\item In Section \ref{sec:existence}, we provide the list of nilpotent orbits that admit a $(G,\sigma)$-equiva\-riant real structure.
It turns out that, except for a few cases in type $D_{2n}$ (with $n \geq 2$), every nilpotent orbit $\O$ in $\g$  admits a $(G,\sigma)$-equivariant real structure when $\g$ is simple.
\smallskip
\item For every nilpotent orbit $\O$ in a complex simple Lie algebra $\g$, we have $\O^{d\sigma_e}=\O \cap \g^{d\sigma_e}$, which is a real manifold (possibly empty) whose $G_0(\R)$-orbits, usually called \emph{real nilpotent orbits}, are classified (see \cite[Section 9]{CM93}). 
Let us mention that a study of some properties of the complexification process for these real nilpotent orbits can be found in \cite{Cro16}. 
\smallskip
\item When $\overline{\O}$ is non-normal, we do not know whether every $(G,\sigma)$-equivariant real structure on $\O$ extends to $\overline{\O}$; see Example~\ref{ex:example non-normal case}. In particular, we did not manage to provide a complete answer to the first part of Question \ref{Q2}.
A brief review on  what is known about the (non-)normality of nilpotent orbit closures in semisimple Lie algebras can be found in Section \ref{sec:extension}.  
\smallskip
\item There are examples of almost homogeneous varieties for which equivalent real structures on the open orbit extend to inequivalent real structures on the whole variety; see e.g.~the case of $\SL_2(\C)$-threefolds studied in \cite[Section 3]{MJT}.
\end{enumerate}

\subsection*{Notation}
In this article we work over the field of real numbers $\R$ and over the field of complex numbers $\C$.
We denote by
\[\Gamma:=\Gal(\C/\R)=\{1,\gamma\}\]
the Galois group of the field extension $\C/\R$.

We will always denote by $G$ a complex algebraic group, by $\Dyn(G)$ its Dynkin diagram (when $G$ is semisimple), and by $G_0$ a real algebraic group such that $G_0 \times_{\Spec(\R)} \Spec(\C) \simeq G$.
When $X$ is a complex algebraic variety, the group of automorphisms of $X$ over $\Spec(\C)$ is denoted by $\Aut_\C(X)$. Moreover, when $X$ is equipped with a regular $G$-action, the subgroup of $G$-equivariant automorphisms is denoted by $\Aut_\C^G(X)$.
We denote by $\G_{m,\C}$ the multiplicative group over $\C$.

The reader is referred to \cite{CM93} for the general background on Lie theory and nilpotent orbits in semisimple Lie algebras.

\subsection*{Acknowledgment} 
We are grateful to the anonymous referee for the careful reading of the former version of this paper and for his/her helpful comments.

\section{Preliminaries}

\subsection{Real group structures}\label{sec:real group structures}

\subsubsection{} \label{subsubsec:2.1.1}
We start by recalling the notions of real forms and real group structures for a given complex algebraic group $G$, and how these two notions are related.

\begin{definition}\item \label{def:real group structure}
\begin{enumerate}[1.]
\item A \emph{real form} of $G$ is a pair $(G_0,\Theta)$ with $G_0$ a real algebraic group and $\Theta\colon\  G \to G_0 \times_{\Spec(\R)} \Spec(\C)$ an isomorphism of complex algebraic groups. (Most of the time, one drops the isomorphism $\Theta$ and simply write that $G_0$ is a form of $G$.)
\item  A \emph{real group structure} $\sigma$ on $G$ is a scheme involution on $G$ such that the diagram 
\[
\xymatrix@R=4mm@C=2cm{
    G \ar[rr]^{\sigma} \ar[d]  && G \ar[d] \\
    \Spec(\C)  \ar[rr]^{\Spec(z \mapsto \overline{z})} && \Spec(\C)  
  }
  \]
commutes and
 \[\iota_G \circ \sigma=\sigma \circ \iota_G \ \ \text{ and }\ \ m_G \circ (\sigma \times \sigma)=\sigma \circ m_G,\] where $\iota_G\colon G \to G$ is the inverse morphism and $m_G\colon G \times G \to G$ is the multiplication morphism.
\item Two real group structures $\sigma_1$ and $\sigma_2$ on $G$ are \emph{equivalent} if there exists a complex algebraic group automorphism $\psi \in \Aut_{\gr}(G)$ such that \[\sigma_2=\psi \circ \sigma_1 \circ \psi^{-1}.\] 
\end{enumerate}
\end{definition}

There is a correspondence between real group structures on $G$ and real forms of $G$ given as follows (see \cite[Section 2.12]{BS64} for a general statement of Galois descent, and \cite[Section 1.4]{FSS98} for the particular case of algebraic groups).
\begin{itemize}
\item To a real group structure $\sigma$ on $G$, one associates the real algebraic group $G_0:=G/\langle \sigma \rangle$, where $\gamma$ acts on $G$ through $\sigma$, and the isomorphism $\Theta$ is given by $(q,f)$, where $q\colon G \to G_0$ is the quotient morphism and $f\colon G \to \Spec(\C)$ is the structure morphism.
\item To a real form $(G_0,\Theta)$ of $G$, one associates the real group structure $\sigma$ that makes the following diagram commutes:
\[
\xymatrix{
    G \ar[r]^{\Theta \ \ \ \ \ \ \ \ \ \ \ \ }   \ar[d]_{\sigma}  & G_0 \times_{\Spec(\R)}  \Spec(\C)  \ar[d]^{Id \times (\lambda \mapsto \overline{\lambda})} \\
   G \ar[r]^{\Theta \ \ \ \ \ \ \ \ \ \ \ \ }  &  G_0 \times_{\Spec(\R)}  \Spec(\C)
  }
\]
\end{itemize}
Moreover, two real forms of $G$ are isomorphic (as real algebraic groups) if and only if the corresponding real group structures are equivalent.

\subsubsection{}
In this article, we are mostly interested in complex semisimple algebraic groups; nevertheless, the case of complex tori will play a role in the proof of Theorem~\ref{th:second part}.

\begin{lemma} \label{lem: real form on tori} 
(Real group structures on complex tori; see \cite[Lemma~1.5]{MJT18} and \cite[Theorem~2]{Cas08}.)\ 
Let $T \iso \G_{m,\C}^{n}$ be an $n$-dimensional complex torus with $n \geq 1$.
\begin{enumerate}[leftmargin=*]
\item If $n=1$, then $T$ has exactly two inequivalent real group structures, defined by $\sigma_0: t \mapsto \overline{t}$ and $\sigma_1: t \mapsto \overline{t}^{-1}$.
\item If $n=2$, then $\sigma_2:  (t_1,t_2) \mapsto (\overline{t_2},\overline{t_1})$ defines a real group structure on $T$.
\item \label{item: n at least 2} If $n\ge 2$,  then every real group structure on $T$ is equivalent to exactly one real group structure of the form  $\sigma_0^{\times n_0}\times\sigma_1^{\times n_1}\times\sigma_2^{\times n_2}$, where $n=n_0+n_1+2n_2$.
\end{enumerate}
\end{lemma}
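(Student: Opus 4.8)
\emph{Proof sketch.} Part~(ii) is a direct verification: $\sigma_2$ is antiregular because it applies complex conjugation in each factor, it satisfies $\sigma_2\circ\sigma_2=\mathrm{id}$, and it respects the group law since the coordinate swap and coordinatewise conjugation both do; hence it is a real group structure on $T\iso\G_{m,\C}^2$.

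For parts~(i) and~(iii), the plan is to convert the statement into the integral representation theory of $\Gamma$. Let $\tau:=\sigma_0^{\times n}$ denote the standard conjugation $(t_i)_i\mapsto(\overline{t_i})_i$ on $T=\G_{m,\C}^n$, and recall that $\Aut_{\gr}(\G_{m,\C}^n)\iso\GL_n(\Z)$ via the action on the character lattice $X^*(T)\iso\Z^n$; every such automorphism is given by a matrix with integer entries and therefore commutes with $\tau$. Given a real group structure $\sigma$ on $T$, the composite $\phi_\sigma:=\sigma\circ\tau$ is then a group automorphism of $T$, i.e.\ an element of $\GL_n(\Z)$, and since $\tau\circ\phi_\sigma=\phi_\sigma\circ\tau$ one gets $\sigma^2=\phi_\sigma\circ\tau\circ\phi_\sigma\circ\tau=\phi_\sigma^2$; thus $\sigma\mapsto\phi_\sigma$ is a bijection from real group structures on $T$ to involutions in $\GL_n(\Z)$. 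Moreover, for $\psi\in\Aut_{\gr}(T)$ one computes $\psi\circ\sigma\circ\psi^{-1}=(\psi\phi_\sigma\psi^{-1})\circ\tau$, so under this bijection equivalence of real group structures corresponds exactly to $\GL_n(\Z)$-conjugacy of involutions. (Equivalently, this is the usual description of the real forms of $T$ by $\H^1(\Gamma,\GL_n(\Z))$ with trivial Galois action.)

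It then remains to classify involutions $\phi\in\GL_n(\Z)$ up to conjugacy, equivalently $\Z[\Gamma]$-lattice structures on $\Z^n$ up to isomorphism. Here I would invoke the classification of integral representations of a cyclic group of order two (Diederichsen, Reiner): there are precisely three indecomposable $\Z[\Gamma]$-lattices — the trivial lattice $\Z$ ($\gamma$ acting by $1$), the sign lattice $\Z$ ($\gamma$ acting by $-1$), and the regular lattice $\Z[\Gamma]$ ($\gamma$ acting by $\left(\begin{smallmatrix}0&1\\1&0\end{smallmatrix}\right)$) — and every $\Z[\Gamma]$-lattice is a direct sum of copies of these with multiplicities $n_0,n_1,n_2$ that are uniquely determined (for instance $n_1$ is read off from the $2$-torsion of the coinvariants and $n_0-n_1=\tr\phi$). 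Hence $\phi$ is conjugate to a unique block matrix $\diag(I_{n_0},-I_{n_1})$ together with $n_2$ blocks $\left(\begin{smallmatrix}0&1\\1&0\end{smallmatrix}\right)$, with $n=n_0+n_1+2n_2$. Transporting this through the bijection of the previous paragraph — noting that the block $I_1$ gives $\sigma_0$, the block $-I_1$ gives $\sigma_1=(t\mapsto t^{-1})\circ\sigma_0$, and the block $\left(\begin{smallmatrix}0&1\\1&0\end{smallmatrix}\right)$ gives $\sigma_2$ — yields~(iii); and the case $n=1$, where $\GL_1(\Z)=\{\pm1\}$ has the two conjugacy classes $\{1\}$ and $\{-1\}$, yields~(i).

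The only substantial ingredient is the Diederichsen–Reiner classification of $\Z[\Gamma]$-lattices, including the uniqueness of the decomposition (a form of Krull–Schmidt that holds in this case although it can fail for general integral lattices over other orders); everything else is formal bookkeeping. As indicated in the statement, one may alternatively just cite \cite[Lemma~1.5]{MJT18} and \cite[Theorem~2]{Cas08}.
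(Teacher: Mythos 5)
Your argument is correct. The paper itself gives no proof of this lemma --- it is quoted verbatim from \cite[Lemma~1.5]{MJT18} and \cite[Theorem~2]{Cas08} --- and what you have written is precisely the standard argument underlying those references: the bijection $\sigma\mapsto\sigma\circ\tau$ identifies equivalence classes of real group structures on $T$ with conjugacy classes of elements of order dividing $2$ in $\GL_n(\Z)=\Aut_{\gr}(T)$ (equivalently $\H^1(\Gamma,\GL_n(\Z))$ for the trivial action), and the Diederichsen--Reiner classification of $\Z[\Gamma]$-lattices, with the uniqueness of the multiplicities $n_0,n_1,n_2$ recovered from the rank, the trace, and the $2$-torsion of the coinvariants, finishes the proof. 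The only cosmetic point is that ``involution'' should be read as ``element of order dividing $2$'' so that the identity (giving $\sigma_0^{\times n}$) is included.
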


\subsubsection{}
Let $G$ be a complex semisimple algebraic group. There exists a central isogeny $\widetilde{G} \to G$, where $\widetilde{G}$ is a simply-connected semisimple algebraic group. Then $\widetilde{G}$ is isomorphic to a product of simply-connected simple algebraic groups (see \cite[Exercise~1.6.13 and Section 6.4]{Con14}).
Moreover, every real group structure $\sigma$ on $G$ lifts uniquely to a real group structure $\widetilde{\sigma}$ on $\widetilde{G}$ (this follows for instance from \cite[Section 3.4, Theorem]{Pro07}).

The next result reduces the classification of real group structures on simply-connected semisimple algebraic groups to the classification of real group structures on simply-connected simple algebraic groups.

\begin{lemma} \emph{(\cite[Lemma~1.7]{MJT18})}\label{lem:easy_lemma_reduction}
Let $\sigma$ be a real group structure on a complex simply-connected semisimple algebraic group $\widetilde{G} \iso \prod_{i \in I} G_i$, where the $G_i$ are the simple factors of $\widetilde{G}$. Then, for a given $i \in I$, we have the following possibilities:
\begin{enumerate}[leftmargin=*]
\item \label{item: Gi stable}$\sigma(G_i)=G_i$ and $\sigma_{|G_i}$ is a real group structure on $G_i$; or
\item \label{item:Gi and Gj swap}  there exists $j \neq i$ such that $\sigma(G_i)=G_j$, then $G_i \iso G_j$ and $\sigma_{| G_i \times G_j}$ is equivalent to $(g_1,g_2) \mapsto (\sigma_0(g_2),\sigma_0(g_1))$  for an arbitrary real group structure $\sigma_0$ on $G_i \iso G_j$.
\end{enumerate}
\end{lemma}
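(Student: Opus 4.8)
The plan is to exploit the rigidity of the decomposition $\widetilde{G}\iso\prod_{i\in I}G_i$ into simple factors. For a simply-connected semisimple group these factors are exactly the minimal non-trivial connected normal algebraic subgroups, so the collection $\{G_i\}_{i\in I}$ is an intrinsic invariant of $\widetilde{G}$. Since $\sigma$ is a $\gamma$-semilinear automorphism of the scheme $\widetilde{G}$ that respects the group law, it carries connected normal algebraic subgroups to connected normal algebraic subgroups; hence it permutes $\{G_i\}_{i\in I}$ and induces a permutation $\tau$ of $I$. As $\sigma$ is an involution, so is $\tau$, and $I$ splits into fixed points and transpositions of $\tau$. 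I would then treat these two cases separately, which is precisely the dichotomy \ref{item: Gi stable}--\ref{item:Gi and Gj swap}.

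For a fixed point $i$ of $\tau$, i.e.\ $\sigma(G_i)=G_i$, the restriction $\sigma_{|G_i}$ is again a $\gamma$-semilinear involutive automorphism of $G_i$ compatible with the group structure, so it is by definition a real group structure on $G_i$; this settles case \ref{item: Gi stable}. For a transposition $\{i,j\}$ with $\sigma(G_i)=G_j$ and $i\neq j$, set $\alpha:=\sigma_{|G_i}\colon G_i\to G_j$. This is an antiregular group isomorphism; since the based root datum of a complex semisimple group is invariant under complex conjugation, the existence of such an $\alpha$ forces $G_i\iso G_j$ as complex algebraic groups. Moreover the relation $\sigma^2=\mathrm{id}$ forces $\sigma_{|G_j}=\alpha^{-1}$, so that
\[
\sigma_{|G_i\times G_j}(g_i,g_j)=(\alpha^{-1}(g_j),\,\alpha(g_i)).
\]

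To bring this into the asserted normal form, I would fix a regular isomorphism $\psi\colon G_j\isoto H:=G_i$ and transport $\sigma_{|G_i\times G_j}$ through $\mathrm{id}\times\psi$ to the real group structure $\sigma'(h_1,h_2)=(a^{-1}(h_2),a(h_1))$ on $H\times H$, where $a:=\psi\circ\alpha$ is an antiregular group automorphism of $H$; note that $\sigma'$ is automatically involutive for \emph{any} $a$, so $a$ itself need not be an involution. Now choose any real group structure $\sigma_0$ on $H$ (one exists because every complex simple group admits a split real form), and put $\phi:=\sigma_0\circ a^{-1}$, which is a regular group automorphism of $H$ since it is the composite of two antiregular ones. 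A direct computation then shows that conjugating $\sigma'$ by $\mathrm{id}_H\times\phi$ yields
\[
(h_1,h_2)\longmapsto(\sigma_0(h_2),\,\sigma_0(h_1)),
\]
using $\phi\circ a=\sigma_0$ and $a^{-1}\circ\phi^{-1}=(\phi\circ a)^{-1}=\sigma_0^{-1}=\sigma_0$. This establishes case \ref{item:Gi and Gj swap}, and the freedom in the choice of $\sigma_0$ explains why the statement permits an arbitrary real group structure.

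The only delicate point is the very first step: justifying that a semilinear, group-compatible involution must permute the simple factors. I expect this to be the main obstacle, and I would ground it in the uniqueness of the decomposition into simple factors for a simply-connected semisimple group, together with the fact that semilinear group automorphisms preserve the lattice of connected normal algebraic subgroups. Once the permutation $\tau$ is in hand, the fixed-point case is immediate, and the transposition case reduces to the elementary conjugation computation above.
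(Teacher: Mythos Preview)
The paper does not prove this lemma at all: it is simply quoted from \cite[Lemma~1.7]{MJT18} and used as a black box. So there is no ``paper's own proof'' to compare against, and your proposal is in fact supplying an argument the paper omits.

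Your argument is correct. The crucial first step---that $\sigma$ permutes the simple factors---is exactly what one should argue, and your justification (the $G_i$ are the minimal non-trivial connected normal subgroups, an intrinsic description preserved by any antiregular group automorphism) is the standard one. The treatment of the fixed-point case is immediate, and your reduction of the transposition case via the conjugation by $\mathrm{id}_H\times\phi$ with $\phi=\sigma_0\circ a^{-1}$ is clean and checks out: $\phi\circ a=\sigma_0$ and $a^{-1}\circ\phi^{-1}=\sigma_0^{-1}=\sigma_0$ give precisely $(h_1,h_2)\mapsto(\sigma_0(h_2),\sigma_0(h_1))$.

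One small stylistic point: your justification that $G_i\iso G_j$ as complex groups (``the based root datum is invariant under complex conjugation'') is correct but slightly oblique. It is perhaps more transparent to say: pick any real group structure $\sigma_0$ on $G_j$ (e.g.\ the split one); then $\sigma_0\circ\alpha\colon G_i\to G_j$ is a \emph{regular} group isomorphism. This also makes the subsequent choice of $\psi$ concrete.
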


Real group structures on complex simply-connected simple algebraic groups are well-known (see e.g.~\cite[Section 1.7.2]{GW09} or \cite[Section V\!I.10]{Kna02}).
Therefore, all real group structures on complex (simply-connected) semisimple algebraic groups can be determined from Lemma \ref{lem:easy_lemma_reduction}.

\subsubsection{} Let $G$ be a complex semisimple algebraic group endowed with a real group structure $\sigma$.
In order to describe which nilpotent orbits in $\g$ admit a $(G,\sigma)$-equivariant real structure (in Section \ref{sec:existence}), we need to recall the notions of split real group structures, as well as the notions of inner and outer twists.
\begin{definition}\item
\begin{enumerate}[1.]
\item If there exists a Borel subgroup $B \subseteq G$ such that $\sigma(B)=B$, then $\sigma$ is called \emph{quasi-split}.
If furthermore $B$ contains a maximal torus $T$ such that $\sigma(T)=T$ and the restriction $\sigma_{|T}$ is equal to the product $\sigma_0^{\times \dim(T)}$ (with the notation of Lemma~\ref{lem: real form on tori}), then $\sigma$ is called \emph{split}.
\item For $c \in G$, we write 
\[\inn_c\colon G \to G,\ g \mapsto cgc^{-1}.\] We say that $\inn_c$ is an \emph{inner} automorphism of $G$. An automorphism of $G$ that is not inner is called \emph{outer}.
If $\sigma_1$ and $\sigma_2$ are two real group structures on $G$, we may write $\sigma_2=\varphi \circ \sigma_1$ with $\varphi \in \Aut_{\gr}(G)$, and we say that $\sigma_2$ is an \emph{inner} (resp.~an \emph{outer}) \emph{twist} of $\sigma_1$ if $\varphi$ is an inner (resp.~an outer) automorphism of $G$. 
Note that the relation of being inner twists is an equivalence relation on the set of real group structures on $G$.
\end{enumerate}
\end{definition}

According to \cite[Proposition~7.2.12]{Con14}, every real group structure on $G$ is an inner twist of a unique (up to equivalence) quasi-split real group structure on $G$. Moreover, in types $A_1-B-C-E_7-E_8-F_4-G_2$ (i.e.~when $\Aut(\Dyn(G))$ is trivial), any quasi-split real group structure is equivalent to a split one, and in types $A_n (n \geq 2)-D-E_6$ (i.e.~when $\Aut(\Dyn(G))$ contains exactly one conjugacy class of elements of order $2$), there are exactly two equivalence classes of quasi-split real group structures; this follows for instance from \cite[Propositions~7.2.2 and 7.2.12]{Con14}.

\subsubsection{}\label{subsubsec: star-action}
Finally, we recall how the choice of a real group structure $\sigma$ on a complex semisimple algebraic group $G$ induces a $\Gamma$-action on $\Dyn(G)$. We refer to \cite[Remark 7.1.2]{Con14} or \cite[Section A.2]{MJT18} and references therein for details. 

Let $B \subseteq G$ be a Borel subgroup, and let $T \subseteq B$ be a maximal torus.
(We say that $(T,B)$ is a \emph{Borel pair} in $G$.)
Then $(T',B'):=(\sigma(T),\sigma(B))$ is another Borel pair in $G$, and so there exists $c_{\sigma} \in G$ (unique up to left multiplication by an element of $T$) such that 
\[
c_{\sigma} T' c_{\sigma}^{-1}=T\ \ \ \ \text{and}\ \ \ \ c_{\sigma} B' c_{\sigma}^{-1}=B.
\]
Let $\theta:=\inn_{c_{\sigma}} \circ \sigma$ that is an antiregular automorphism of $G$. 
Since $\theta(T)=T$ and $\theta(B)=B$, the automorphism $\theta$ induces a lattice automorphism of the character group $\X(T):=\Hom_{\gr}(T,\G_{m,\C})$ as follows:
 \[
 \forall \chi \in \X(T),\ \chi \mapsto\ga \chi :=\sigma_0 \circ \chi \circ \theta^{-1},\]
where $\sigma_0(t)=\overline{t}$ is the complex conjugation on $\G_{m,\C}$. One can check that this lattice automorphism is of order $\leq 2$, that it does not depend on the choice of $c_{\sigma}$, and that it stabilizes the sets of roots $\RR \subseteq \X(T)$ and of simple roots $\SS \subseteq \RR$ associated with the triple $(G,B,T)$.

\begin{definition}\label{def: star-action}
By the previous discussion, the choice of $\sigma$ induces an action $\Gamma \to \Aut(\Dyn(G))$, usually referred as the \emph{$\star$-action induced by $\sigma$}. This action does not depend on the choice of the Borel pair $(T,B)$ in $G$.
We will denote by $\sigma_D \in \Aut(\Dyn(G))$ the image of the non-trivial element $\gamma \in \Gamma$.
\end{definition} 

\begin{remark}\label{rk: star actions and inner twists}
It follows from the definition of the $\star$-action that if $\sigma$ is an inner twist of $\sigma'$, then the $\star$-actions induced by $\sigma$ and $\sigma'$ coincide. 
In particular, if $\sigma$ is an inner twist of a split real group structure on $G$, then $\sigma_D$ is trivial. On the other hand, if $\sigma$ is an outer twist of a split real group structure on $G$, then it follows from \cite[Section 1.5, second half of p.~41]{Con14} that $\sigma_D$ is non-trivial.
\end{remark}

\subsection{Equivariant real structures}  \label{sec:equiv real structures}
We fix a complex algebraic group $G$, a real group structure $\sigma$ on $G$, and we denote by $G_0=G/\langle \sigma \rangle$ the corresponding real form.

\subsubsection{} We start by recalling the notions of real forms and equivariant real structures for complex $G$-varieties, and how they are related, as we did for complex algebraic groups in Section \ref{subsubsec:2.1.1}.

\begin{definition}\label{def: equivariant real structure}
Let $X$ be a complex $G$-variety.
\begin{enumerate}[1.]
\item An \emph{$(\R,G_0)$-form} of $X$ is a pair $(X_0,\Xi)$ with $X_0$ a real $G_0$-variety and $\Xi\colon X \to X_0 \times_{\Spec(\R)} \Spec(\C) $ an isomorphism of complex $G$-varieties. 
(As for real forms of complex algebraic groups, one usually drops the isomorphism $\Xi$ and simply write that $X_0$ is a form of $X$.) 
\item A \emph{$(G,\sigma)$-equivariant real structure} on $X$ is an antiregular involution $\mu$ on $X$, that is, a scheme involution on $X$  such that the following diagram commutes
\[
\xymatrix@R=4mm@C=2cm{
    X \ar[rr]^{\mu} \ar[d]  && X \ar[d] \\
    \Spec(\C)  \ar[rr]^{\Spec(z \mapsto \overline{z})} && \Spec(\C)  
  }
  \]
and satisfying the condition~\eqref{eq:def eq real structure} stated in the introduction.
\item Two $(G,\sigma)$-equivariant real structures $\mu$ and $\mu'$ on $X$ are \emph{equivalent} if there exists a $G$-equivariant automorphism $\varphi \in \Aut_\C^G(X)$ such that $\mu'=\varphi \circ \mu\circ \varphi^{-1}$.
\end{enumerate}
\end{definition}

We now suppose that $X$ is quasi-projective. (Later in this article, we will in fact restrict to the case where $X$ is quasi-affine.) Under this condition, as for real group structures (see Section \ref{subsubsec:2.1.1}), there is a correspondence between $(G,\sigma)$-equivariant real structures on $X$ and $(\R,G_0)$-forms of $X$ (see \cite[Section 5]{Bor20}). Moreover, this correspondence induces a bijection between isomorphism classes of $(\R,G_0)$-forms of $X$ (as real algebraic $G_0$-varieties) and equivalence classes of $(G,\sigma)$-equivariant real structures on $X$.

\begin{example}\label{ex:dsigma is an eq real structure}
Considering the Lie algebra $\g$ of $G$ as a $G$-variety for the adjoint action, we observe that the antilinear Lie algebra involution $d\sigma_e$, obtained by differentiating $\sigma$ at the identity element, is also a $(G,\sigma)$-equivariant real structure on $\g$.
Indeed, for a fixed $g \in G$, if we denote $\inn_g\colon G \to G, h \mapsto ghg^{-1}$, then we have 
\[\inn_{\sigma(g)} \circ \sigma=\sigma \circ \inn_g,\] 
and the condition \eqref{eq:def eq real structure} follows from computing the differential at the identity element on both sides of this equality. 
\end{example}

\subsubsection{}\label{subsubsec: 2.2.2}
The condition \eqref{eq:def eq real structure} implies that an equivariant real structure $\mu$ on a $G$-variety $X$ maps $G$-orbits to $G$-orbits, and so a given $G$-orbit is either fixed by $\mu$ or exchanged with another $G$-orbit. In particular, if $X$ has a dense open orbit $U$, then $U$ must be fixed by $\mu$, and so $\mu_{|U}$ is an equivariant real structure on $U$. Let us note that a given equivariant real structure $\mu_0$ on $U$ need not extend to $X$, but if $\mu_0$ extends to $X$, then this extension is unique by \cite[Chapter I, Lemma~4.1]{Har}.

\subsubsection{}
In the case of homogeneous spaces, we have the following criterion for the existence of an equivariant real structure.

\begin{lemma} \emph{(\cite[Lemma~2.4]{MJT18})}\label{lem: two conditions}
Let  $X=G/H$ be a homogeneous space.
Then $X$ admits a $(G,\sigma)$-equivariant real structure if and only if there exists $g_0 \in G$ such that the following two conditions hold:
\begin{enumerate}[1.]
\item \label{eq: sigma compatible}
\emph{$(G,\sigma)$-compatibility condition:}  $\sigma(H)=g_0Hg_{0}^{-1}$
\item \label{eq: involution}
\emph{involution condition:}\hspace{15mm} $\sigma(g_0)g_0 \in H$
\end{enumerate}
in which case one real structure is given by 
\[ \forall k \in G, \ \mu(kH)=\sigma(k)g_0H.\]
\end{lemma}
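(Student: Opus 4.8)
The plan is to exploit the fact that, by the equivariance condition~\eqref{eq:def eq real structure}, a $(G,\sigma)$-equivariant real structure $\mu$ on $X = G/H$ is entirely determined by its value at the base point $x_0 := eH$, since $\mu(k\cdot x_0) = \sigma(k)\cdot \mu(x_0)$ for every $k \in G$. Thus the lemma amounts to understanding which points of $X$ can occur as $\mu(x_0)$; writing $\mu(x_0) = g_0 \cdot x_0$ translates the problem into conditions on $g_0 \in G$.

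For the ``if'' direction, given $g_0$ satisfying conditions~\ref{eq: sigma compatible} and~\ref{eq: involution}, I would first check that the antiregular morphism $\phi\colon G \to G$, $k \mapsto \sigma(k)g_0$ (the composite of $\sigma$ with right translation by $g_0$), is constant on left $H$-cosets: for $h \in H$ one has $\phi(kh)H = \sigma(k)\sigma(h)g_0 H$, and $\sigma(h) \in \sigma(H) = g_0 H g_0^{-1}$ (condition~\ref{eq: sigma compatible}) forces $g_0^{-1}\sigma(h)g_0 \in H$, so $\phi(kh)H = \phi(k)H$. Since $G \to G/H$ is faithfully flat, $\phi$ descends to an antiregular morphism $\mu\colon X \to X$, $kH \mapsto \sigma(k)g_0 H$; it is $(G,\sigma)$-equivariant by construction, and $\mu^2(kH) = \sigma(\sigma(k)g_0)g_0 H = k\,(\sigma(g_0)g_0)\,H = kH$ by condition~\ref{eq: involution}, so $\mu$ is an involution. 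This proves existence and yields the displayed formula at once.

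For the ``only if'' direction, starting from a $(G,\sigma)$-equivariant real structure $\mu$, I would write $\mu(x_0) = g_0\cdot x_0$ and extract the two conditions: evaluating~\eqref{eq:def eq real structure} on $h \in H = \Stab_G(x_0)$ gives $\sigma(h)g_0\cdot x_0 = g_0\cdot x_0$, hence $g_0^{-1}\sigma(H)g_0 \subseteq H$; and $\mu^2 = \mathrm{id}$ evaluated at $x_0$ gives $\sigma(g_0)g_0 \in \Stab_G(x_0) = H$, which is condition~\ref{eq: involution}. The one step I expect to require a genuine (if short) argument is upgrading the inclusion $\sigma(H) \subseteq g_0 H g_0^{-1}$ to the equality in condition~\ref{eq: sigma compatible}: applying the involutive group automorphism $\sigma$ to this inclusion, then substituting $\sigma(g_0) = h_0 g_0^{-1}$ with $h_0 := \sigma(g_0)g_0 \in H$ and conjugating by $h_0 \in H$, one gets $g_0 H g_0^{-1} \subseteq \sigma(H)$, so the two subgroups coincide; this is where condition~\ref{eq: involution} is really used. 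The remaining scheme-theoretic verifications (that $\phi$ descends and that the resulting $\mu$ is antiregular) are routine in characteristic zero, and finally $\mu(kH) = \sigma(k)\cdot\mu(x_0) = \sigma(k)g_0 H$ gives the stated formula.
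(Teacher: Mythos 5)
Your proof is correct. Note that the paper itself gives no proof of this lemma --- it simply cites \cite[Lemma~2.4]{MJT18} --- so there is nothing to compare against internally; your argument is the standard one that the cited reference follows: determine $\mu$ by its value $g_0H$ at the base point, read off the two conditions from $\mu(hH)=\mu(eH)$ and $\mu^2(eH)=eH$, and conversely check that $kH\mapsto\sigma(k)g_0H$ is well defined, antiregular, equivariant, and involutive. Your observation that the involution condition is what upgrades the a priori inclusion $\sigma(H)\subseteq g_0Hg_0^{-1}$ to the stated equality (via $\sigma(g_0)=h_0g_0^{-1}$ with $h_0\in H$) is exactly the right point to flag, and the computation you sketch for it is correct.
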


\subsubsection{}
The classical way to determine the number of equivalence classes for real structures, assuming that such a structure exists, is via Galois cohomology (see \cite{Ser02} for a general reference on Galois cohomology).

\begin{lemma}  \emph{(\cite[Lemma~2.11]{MJT18}, see also \cite[Section 8]{Wed18})}\label{lem:Galois H1 to param eq real structures}\\
Let  $X$ be a $G$-variety endowed with a $(G,\sigma)$-equivariant real structure $\mu_0$. 
Let the Galois group $\Gamma$ act on $\Aut_\C^G(X)$ by conjugation by $\mu_0$, and denote by $\H^1(\Gamma,\Aut_\C^G(X))$ the corresponding first Galois cohomology set.
Then the map
{\small
\[\begin{array}{ccl}
 \H^1(\Gamma,\Aut_\C^G(X)) &\to &\{ \text{equivalence classes of $(G,\sigma)$-equivariant real structures on $X$}\}\\
 \varphi &\mapsto &\ \ \ \varphi \circ \mu_0
 \end{array}\]}
 \normalsize
 is a bijection that sends the identity element to the equivalence class of $\mu_0$.
\end{lemma}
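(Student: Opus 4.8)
The plan is to run the standard ``twisting'' argument identifying forms with nonabelian $\H^1$, making explicit how cocycles correspond to real structures and coboundaries to equivalences. Write $A := \Aut_\C^G(X)$, on which $\Gamma = \{1,\gamma\}$ acts by $\leftexp{\gamma}{\varphi} := \mu_0 \circ \varphi \circ \mu_0$ (using $\mu_0^{-1} = \mu_0$). First I would check that this is a genuine action of $\Gamma$ by group automorphisms of $A$: since $\varphi$ is $\C$-regular and $\mu_0$ is antiregular, the composite $\mu_0 \varphi \mu_0$ is again $\C$-regular, and for $g \in G$, $x \in X$ a short computation using $\mu_0(g\cdot x) = \sigma(g)\cdot\mu_0(x)$ together with $\sigma^2 = \mathrm{id}$ gives $(\mu_0\varphi\mu_0)(g\cdot x) = g\cdot(\mu_0\varphi\mu_0)(x)$, so $\leftexp{\gamma}{\varphi} \in A$; moreover $\leftexp{\gamma}{(\leftexp{\gamma}{\varphi})} = \varphi$ and $\leftexp{\gamma}{(\varphi\psi)} = \leftexp{\gamma}{\varphi}\,\leftexp{\gamma}{\psi}$. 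For $\Gamma$ of order two the nonabelian cohomology is elementary: a $1$-cocycle $a$ is forced to satisfy $a_1 = \mathrm{id}$ (from $a_1 = a_1\,\leftexp{1}{a_1}$), so it is determined by $\varphi := a_\gamma$, and the cocycle condition $a_\gamma\, \leftexp{\gamma}{a_\gamma} = a_{\gamma^2} = a_1$ reads $\varphi\,\leftexp{\gamma}{\varphi} = \mathrm{id}$, that is, $(\varphi\mu_0)^2 = \mathrm{id}$. Thus $Z^1(\Gamma,A) = \{\varphi\in A : (\varphi\mu_0)^2 = \mathrm{id}\}$.

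Next I would establish the bijection at the level of cocycles, before descending to classes. Given $\varphi \in Z^1(\Gamma,A)$, set $\mu := \varphi\circ\mu_0$. It is antiregular (regular $\circ$ antiregular), it is an involution precisely because $\varphi$ satisfies the cocycle condition $(\varphi\mu_0)^2 = \mathrm{id}$, and it satisfies the equivariance condition~\eqref{eq:def eq real structure}: for $g\in G$ one has $\mu(g\cdot x) = \varphi(\sigma(g)\cdot\mu_0(x)) = \sigma(g)\cdot\mu(x)$ by $G$-equivariance of $\varphi$. Hence $\mu$ is a $(G,\sigma)$-equivariant real structure. Conversely, given any such structure $\mu$, the map $\varphi := \mu\circ\mu_0$ is $\C$-regular and, again using $\sigma^2 = \mathrm{id}$, $G$-equivariant, so $\varphi \in A$; since $\mu_0^2 = \mathrm{id}$ we recover $\mu = \varphi\mu_0$, and $\varphi \in Z^1$ because $\mu$ is an involution. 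This shows that $\varphi\mapsto\varphi\mu_0$ is a bijection from $Z^1(\Gamma,A)$ onto the set of all $(G,\sigma)$-equivariant real structures on $X$, with inverse $\mu\mapsto\mu\mu_0$.

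Finally I would match the two equivalence relations so that the bijection descends to $\H^1$. Two cocycles $\varphi,\psi$ are cohomologous iff there is $c\in A$ with $\psi = c^{-1}\varphi\,\leftexp{\gamma}{c}$; on the other side, two real structures $\mu = \varphi\mu_0$ and $\mu' = \psi\mu_0$ are equivalent iff there is $c\in A$ with $\mu' = c\mu c^{-1}$. Right-multiplying the latter identity by $\mu_0$ and inserting $\mu_0\mu_0 = \mathrm{id}$ turns it into $\psi = c\,\varphi\,(\mu_0 c^{-1}\mu_0) = c\,\varphi\,\leftexp{\gamma}{(c^{-1})}$, which, after replacing $c$ by $c^{-1}$, is exactly the coboundary relation. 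Hence the bijection above carries cohomology classes to equivalence classes, yielding the asserted bijection $\H^1(\Gamma,A)\to\{\text{equivalence classes of }(G,\sigma)\text{-equivariant real structures on }X\}$; and the distinguished class, represented by the trivial cocycle $\varphi = \mathrm{id}$, maps to the class of $\mu_0$.

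Since $A$ may be nonabelian, $\H^1(\Gamma,A)$ is only a pointed set and the statement is a bijection of pointed sets sending basepoint to basepoint; no group structure is claimed. The argument is essentially pure bookkeeping within the standard descent/twisting formalism, so there is no deep obstacle. The only points demanding care are the well-definedness of the $\Gamma$-action (namely that conjugation by the antiregular involution $\mu_0$ preserves both $\C$-regularity and $G$-equivariance, which is exactly where $\sigma^2 = \mathrm{id}$ is used) and keeping the cocycle/coboundary conventions consistent with the ``involution'' and ``equivalence'' conditions, where an inadvertent inversion of the conjugating element $c$ is the most likely slip.
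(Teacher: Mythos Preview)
Your argument is correct and is precisely the standard twisting computation one would write down to prove this lemma. Note, however, that the paper does not supply its own proof of this statement: it is quoted from \cite[Lemma~2.11]{MJT18} (see also \cite[Section~8]{Wed18}) and used as a black box. So there is no in-paper proof to compare against; your write-up simply fills in what those references contain, and it does so cleanly. The one cosmetic point worth tidying is the handling of the coboundary relation at the end: rather than deriving $\psi = c\,\varphi\,\leftexp{\gamma}{(c^{-1})}$ and then swapping $c\leftrightarrow c^{-1}$, it is slightly more transparent to start from $\mu' = c^{-1}\mu c$ (which is the same equivalence relation) and obtain $\psi = c^{-1}\varphi\,\leftexp{\gamma}{c}$ directly.
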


\subsection{Nilpotent orbits in semisimple Lie algebras}
In this section we recall some classical facts concerning nilpotent orbits in semisimple Lie algebras that we will need to prove the Main Theorem. 

\smallskip

Let $G$ be a complex semisimple algebraic group, and let $\g$ be its Lie algebra.
We denote by $\NN$ the \emph{nilpotent cone} of $\g$, that is, the $G$-stable closed subset of $\g$ formed by the elements $v \in \g$ such that $\mathrm{ad}_v\colon \g \to \g, u \mapsto [u,v]$, is a nilpotent endomorphism of the $\C$-vector space $\g$. Then the following hold:
\smallskip
\begin{enumerate}[(N1)]
\item\label{item:N1} The nilpotent cone $\NN$ is a finite union of $G$-orbits (see \cite[Corollary~3.2.15]{CM93}).\smallskip
\item\label{item:N2} The dimension of a nilpotent orbit is always even (see \cite[Corollary~1.4.8]{CM93}).\smallskip
\item\label{item:N3} Each nilpotent orbit is stable for the $\G_{m,\C}$-action induced by the scalar multiplication on $\g$ (see the proof in \cite[Lemma~4.3.1]{CM93}). \smallskip
\item\label{item:N4} To every nilpotent orbit $\O \subseteq \g$ is associated a unique weighted Dynkin diagram $w(\O)$, that is, a labeling of the nodes of the Dynkin diagram $\Dyn(G)$ with labels in $\{0,1,2\}$ (see \cite[Section 3.5]{CM93}). When $\g$ is a complex simple Lie algebra, a complete classification of the weighted Dynkin diagrams corresponding to nilpotent orbits can be found in \cite[Section 5.3]{CM93}, for the classical cases, and in \cite[Section 8.4]{CM93}, for the exceptional cases.
\smallskip
 \item\label{item:N5} If $v \in \NN$, then $(G \cdot v) \cap \z(\g^{v}) \subseteq \g$ is a dense open subset of $\z(\g^{v})$, where $\g^v:=\{ u \in \g \ |\ [u,v]=0\}$ and $\z(\g^{v})$ is the center of $\g^v$ (see \cite[Propositions~35.3.3 and~35.3.4]{TY05}). 
\smallskip
\item \label{item:N6} Let $v_1,v_2 \in \NN$. Then $\Stab_{G}(v_1)$ and $\Stab_{G}(v_2)$ are conjugate in $G$ if and only if $v_1$ and $v_2$ belong to the same nilpotent orbit (see \cite[Theorem~3.7.1]{Br97}, using the fact that each nilpotent orbit is a decomposition class). 
\smallskip
\item \label{item:N7} Since the Lie algebra $\g$ is semisimple, it is isomorphic to a direct sum of simple Lie algebras $\bigoplus_{i \in I} \g_i$ on which $G$ acts componentwise through the adjoint group $G/Z(G)$. 
Then the nilpotent orbits in $\g$ identify with the products $\prod_{i \in I} \O_i$, where each $\O_i$ is a nilpotent orbit in $\g_i$. The same holds for the nilpotent orbit closures in $\g$ and their normalizations. 
\end{enumerate}

\section{Existence of real structures on nilpotent orbits}\label{sec:existence}
Let $G$ be a complex semisimple algebraic group endowed with a real group structure $\sigma$. 
Let $(T,B)$ be a Borel pair in $G$.
Let $\g$ be the Lie algebra of $G$, and let $\O$ be a nilpotent orbit in $\g$.

\smallskip

In this section we prove the equivalences \ref{item:a} $\Leftrightarrow$ \ref{item:b}(Proposition \ref{prop:first part}) and  \ref{item:a} $\Leftrightarrow$ \ref{item:d} (Corollary \ref{cor: part_zero}) of the Main Theorem. In particular, the equivalence \ref{item:b} $\Leftrightarrow$ \ref{item:d} gives a combinatorial criterion  for the existence of an equivariant real structure on a given nilpotent orbit, and we then use this criterion to determine which nilpotent orbits admit an equivariant real structure (see Proposition \ref{prop:fixed_orbits} and the discussion before).

\begin{proposition}\label{prop:first part}\item
\begin{enumerate}[$(1)$]
\item\label{item: existence_i} If $d\sigma_e(\O)=\O$, then $(d\sigma_e)_{|\O}$ is a $(G,\sigma)$-equivariant real structure on $\O$.
\item\label{item: existence_ii} If $d\sigma_e(\O) \neq \O$, then $\O$ does not admit a $(G,\sigma)$-equivariant real structure.
\end{enumerate}
\end{proposition}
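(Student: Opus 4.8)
The plan is to handle the two parts separately; the first is essentially a formality, and the second reduces to a comparison of stabilizers.

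For part~(1): by Example~\ref{ex:dsigma is an eq real structure}, $d\sigma_e$ is a $(G,\sigma)$-equivariant real structure on $\g$ for the adjoint action. If $d\sigma_e(\O)=\O$, then $(d\sigma_e)_{|\O}$ is an antiregular involution of the locally closed, $G$-stable subvariety $\O\subseteq\g$ which inherits the equivariance relation~\eqref{eq:def eq real structure} from $\g$; hence it is a $(G,\sigma)$-equivariant real structure on $\O$, and there is nothing further to prove.

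For part~(2), I would argue by contraposition: assuming $\O$ admits a $(G,\sigma)$-equivariant real structure, I will deduce $d\sigma_e(\O)=\O$. Fix $e\in\O$ and set $H:=\Stab_G(e)$, so that $\O\iso G/H$ as $G$-varieties. The key point is the identity
\[
\sigma(H)=\Stab_G\!\big(d\sigma_e(e)\big).
\]
To prove it, differentiate at the identity the relation $\inn_{\sigma(g)}\circ\sigma=\sigma\circ\inn_g$ (cf.\ Example~\ref{ex:dsigma is an eq real structure}), which gives $\Ad(\sigma(g))\circ d\sigma_e=d\sigma_e\circ\Ad(g)$ for every $g\in G$; substituting $g=\sigma(g')$ and using that $d\sigma_e$ is bijective yields $g\in\Stab_G(d\sigma_e(e))\iff\Ad(g')e=e\iff\sigma(g)\in H$, i.e.\ the displayed identity. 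Moreover $d\sigma_e$ is a semilinear Lie algebra automorphism, so $\mathrm{ad}_{d\sigma_e(e)}=d\sigma_e\circ\mathrm{ad}_e\circ(d\sigma_e)^{-1}$ is nilpotent, whence $d\sigma_e(e)\in\NN$.

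Now, since $G/H$ carries a $(G,\sigma)$-equivariant real structure, the $(G,\sigma)$-compatibility condition of Lemma~\ref{lem: two conditions} provides $g_0\in G$ with $\sigma(H)=g_0Hg_0^{-1}$. By the identity above, $\Stab_G(d\sigma_e(e))=\Stab_G(g_0\cdot e)$ are then conjugate in $G$, and since $d\sigma_e(e)$ and $g_0\cdot e$ both lie in $\NN$, property~\ref{item:N6} forces them into the same nilpotent orbit; hence $d\sigma_e(e)\in G\cdot(g_0\cdot e)=\O$, and finally $d\sigma_e(\O)=\{\sigma(g)\cdot d\sigma_e(e):g\in G\}=G\cdot d\sigma_e(e)=\O$, as desired. (One could also avoid Lemma~\ref{lem: two conditions}: if $\mu$ denotes the real structure and $\mu(e)=g_0\cdot e$, then $\mu(h\cdot e)=\mu(e)$ for $h\in H$ combined with~\eqref{eq:def eq real structure} gives $g_0^{-1}\sigma(H)g_0\subseteq H$, hence $\sigma(H)$ is $G$-conjugate to $H$ after a count of dimensions and connected components.) I do not anticipate a genuine obstacle: the only delicate step is the bookkeeping forced by the semilinearity of $d\sigma_e$, namely passing from $\sigma(H)$ to the stabilizer $\Stab_G(d\sigma_e(e))$ of an honest nilpotent element before invoking property~\ref{item:N6}.
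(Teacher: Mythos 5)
Your proposal is correct and follows essentially the same route as the paper: part (1) via Example~\ref{ex:dsigma is an eq real structure}, and part (2) by contraposition, combining the $(G,\sigma)$-compatibility condition of Lemma~\ref{lem: two conditions} with property~\ref{item:N6} on stabilizers of nilpotent elements. The only difference is that you spell out the identity $\sigma(\Stab_G(v))=\Stab_G(d\sigma_e(v))$ (and the nilpotency of $d\sigma_e(v)$) explicitly, where the paper simply asserts it as a consequence of condition~\eqref{eq:def eq real structure} applied to $v$; this is a harmless elaboration of the same argument.
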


\begin{proof}
\ref{item: existence_i}: Follows from Example~\ref{ex:dsigma is an eq real structure}.\\
\ref{item: existence_ii}: 
Let $v \in \O$ and $v':=d\sigma_e(v) \in \O':=d\sigma_e(\O)$. Let $H:=\Stab_G(v)$ and $H':=\Stab_G(v')$.
Since $d\sigma_e$ is a $(G,\sigma)$-equivariant real structure on $\g$, the condition~\eqref{eq:def eq real structure} holds for every element in $\g$; in particular, the condition~\eqref{eq:def eq real structure} applied to $v$ implies that $\sigma(H)=H'$.
Let us assume that $\O$ admits a $(G,\sigma)$-equivariant real structure.
By Lemma~\ref{lem: two conditions}\ref{eq: sigma compatible}, there exists $g_0 \in G$ such that $\sigma(H)=g_0Hg_{0}^{-1}$, and so $H$ and $H'=\sigma(H)$ are conjugate. 
But, according to \ref{item:N6}, the subgroups $H$ and $H'$ are conjugate if and only if $\O=\O'$. The result follows by contraposition.
\end{proof}

\begin{proposition}\label{prop: part_zero}
Let $w(\O)$ be the weighted Dynkin diagram associated to $\O$. 
Then
\[w(d\sigma_e(\O))=\sigma_D(w(\O)),\]
where $\sigma_D$ is the Dynkin diagram automorphism induced by $\sigma$ (see Definition \ref{def: star-action}).
\end{proposition}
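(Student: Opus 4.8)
The plan is to unwind the definitions of the weighted Dynkin diagram and of the $\star$-action, and track how $d\sigma_e$ transports the $\sl_2$-triple data attached to a nilpotent element. Recall that to a nilpotent orbit $\O$ one associates, via the Jacobson--Morozov theorem, an $\sl_2$-triple $(e,h,f)$ with $e \in \O$, and the weighted Dynkin diagram $w(\O)$ is obtained by conjugating so that $h$ lies in the closure of the dominant Weyl chamber with respect to a fixed Borel pair $(T,B)$ (equivalently, $h \in \lt$ with $\alpha(h) \geq 0$ for all simple roots $\alpha \in \SS$), and then reading off the labels $\alpha(h) \in \{0,1,2\}$ on the nodes of $\Dyn(G)$. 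So I would first fix $(e,h,f)$ an $\sl_2$-triple with $e \in \O$ and $h$ dominant, giving $w(\O)(\alpha) = \alpha(h)$.

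Next I would observe that if $(e,h,f)$ is an $\sl_2$-triple, then applying the antilinear Lie algebra involution $d\sigma_e$ yields $(d\sigma_e(e), d\sigma_e(h), d\sigma_e(f))$, which is again an $\sl_2$-triple (the bracket relations $[h,e]=2e$, $[h,f]=-2f$, $[e,f]=h$ are preserved because $d\sigma_e$ is a real Lie algebra automorphism: it is additive, commutes with the bracket, and sends real scalars to real scalars, so the relation $[h,e]=2e$ goes to $[d\sigma_e(h),d\sigma_e(e)]=2\,d\sigma_e(e)$, etc.). Moreover $d\sigma_e(e) \in d\sigma_e(\O)$. Hence $d\sigma_e(h)$ is a characteristic for the orbit $d\sigma_e(\O)$, and $w(d\sigma_e(\O))$ is computed from the unique $W$-conjugate of $d\sigma_e(h)$ that is dominant for $(T,B)$.

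The third step is the bookkeeping that converts "apply $d\sigma_e$" into "apply $\sigma_D$". Here I would use the description of the $\star$-action from Section~\ref{subsubsec: star-action}: there is $c_\sigma \in G$ with $\theta := \inn_{c_\sigma} \circ \sigma$ stabilizing the Borel pair $(T,B)$, and the induced automorphism of $\X(T)$ (equivalently of the root system, equivalently of $\Dyn(G)$) is $\sigma_D$, which by construction permutes $\SS$. Passing to differentials and to cocharacters/the Cartan subalgebra $\lt$, the map $d\theta_e = \Ad(c_\sigma) \circ d\sigma_e$ restricts to $\lt$ and is, up to the identification coming from the Killing form, the transpose-inverse of $\sigma_D$ acting on $\X(T)\otimes\R$; in particular, for $h \in \lt$ dominant one gets that $d\theta_e(h)$ is again in $\lt$ and $\alpha(d\theta_e(h)) = (\sigma_D^{-1}\alpha)(h)$ for all $\alpha \in \SS$. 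Since $\sigma_D$ permutes $\SS$, this shows $d\theta_e(h)$ is dominant and is therefore the characteristic used to compute $w(d\theta_e(\O))$, with labels $w(d\theta_e(\O))(\alpha) = w(\O)(\sigma_D^{-1}\alpha) = (\sigma_D(w(\O)))(\alpha)$. Finally, since $\theta$ and $\sigma$ differ by the inner automorphism $\inn_{c_\sigma}$, the orbits $d\theta_e(\O)$ and $d\sigma_e(\O)$ coincide (an inner automorphism of $\g$ preserves every $G$-orbit), so $w(d\sigma_e(\O)) = w(d\theta_e(\O)) = \sigma_D(w(\O))$, as claimed.

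The main obstacle I anticipate is the precise identification in the third step: making rigorous the claim that $d\theta_e$ acts on $\lt$ as the automorphism dual to $\sigma_D$, i.e.\ correctly matching the antilinear map $d\sigma_e$ on $\g$ (which involves the complex conjugation $\sigma_0$ baked into the definition $\ga\chi = \sigma_0\circ\chi\circ\theta^{-1}$) with an honest linear automorphism of the real vector space $\X(T)\otimes\R \supseteq \{$characteristics$\}$, and checking that no spurious sign or inverse creeps in. A clean way to sidestep part of this is to argue directly on a $\theta$-stable $\sl_2$-triple: choose the Jacobson--Morozov triple so that $h$ is dominant, note $d\theta_e(h)$ is conjugate to an element of $\lt$, and identify that element by evaluating simple roots on it using the definition of the $\star$-action on $\RR$. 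One should also recall (or cite from \cite{CM93}) that the characteristic $h$ determines $w(\O)$ and that two $\sl_2$-triples with dominant characteristics lie in the same orbit iff the characteristics are equal, which is what lets us read off $w(d\sigma_e(\O))$ unambiguously.
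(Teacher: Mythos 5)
Your argument is correct and follows essentially the same route as the paper's proof: transport a Jacobson--Morozov triple with dominant characteristic by the differential of $\theta=\inn_{c_\sigma}\circ\sigma$ (which stabilizes $T$, hence keeps the characteristic in $\lt$ and dominant because $\sigma_D$ permutes $\SS$), read off the new labels from the definition of the $\star$-action, and use that $\theta$ and $\sigma$ differ by an inner automorphism so that $d\theta_e(\O)=d\sigma_e(\O)$. The only cosmetic difference is that the paper applies $d\theta_e^{-1}$ to the triple rather than $d\theta_e$, which is immaterial since $\sigma_D$ has order at most $2$.
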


\begin{proof}
We keep the notation of Section \ref{subsubsec: star-action}.
Let $\lt \subseteq \g$ be the Cartan subalgebra associated to $T \subseteq G$.  
Given a nilpotent orbit $\O\subseteq \g$, we recall the construction of the corresponding weighted Dynkin diagram $w(\O)$ (see \cite[Section 3.5]{CM93} for details). There exists an $\sld$-triple $(x,t,y)$ with $x\in \O$, $t\in \lt$, and such that for every simple root $\alpha\in \SS$, we have $d\alpha_e(t)\in \N$, where $d\alpha_e \in \lt^*$ is the differential of the simple root $\alpha$ at the identity element. In fact, for a given $\O$, there is a unique $t \in \lt$ satisfying this last condition, and one can furthermore show that $d\alpha_e(t)\in \{0,1,2\}$.
Then $w(\O)$ is defined by associating to each node $\alpha \in \SS$ of $\Dyn(G)$ the label $d\alpha_e(t)\in \{0,1,2\}$. 

Let us now consider the weighted Dynkin diagram $\sigma_D(w(\O))$. By definition of $\sigma_D$, it is the weighted Dynkin diagram defined by associating to each node $\alpha \in \SS$ of $\Dyn(G)$ the label 
\begin{equation}\label{eq: gamma alpha} \tag{$\diamond$}
d(\ga \alpha)_e(t)=d\alpha_e(d\theta^{-1}_e(t))\in \{0,1,2\},
\end{equation} 
where $d\theta^{-1}_e$ is the antilinear Lie algebra automorphism obtained by differentiating $\theta^{-1}=\sigma \circ \inn_{c_\sigma^{-1}}\colon G \to G$ at the identity element.
 On the other hand, if $(x,t,y)$ is an $\sld$-triple as above, then 
 \[(x',t',y'):=(d\theta^{-1}_e(x),d\theta^{-1}_e(t),d\theta^{-1}_e(y))\] is an  $\sld$-triple with $x' \in d\theta^{-1}_e(\O)=d\sigma_e(c_\sigma^{-1} \cdot \O)=d\sigma_e(\O)$, $t' \in \lt$ (since $\theta(T)=T$), and such that for every simple root $\alpha\in \SS$, we have $d\alpha_e(t')\in \N$ by \eqref{eq: gamma alpha}. It follows that $w(d\sigma_e(\O))=\sigma_D(w(\O))$.
\end{proof}

The following result is a straightforward consequence of Proposition \ref{prop: part_zero} together with \ref{item:N4}.

\begin{corollary}\label{cor: part_zero}
We have $d\sigma_e(\O)=\O$ if and only if $\sigma_D(w(\O))=w(\O)$.
\end{corollary}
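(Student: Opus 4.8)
The plan is to deduce Corollary~\ref{cor: part_zero} directly from Proposition~\ref{prop: part_zero} and the classification fact~\ref{item:N4}. Proposition~\ref{prop: part_zero} already gives the key identity $w(d\sigma_e(\O)) = \sigma_D(w(\O))$, so the only remaining input needed is that the assignment $\O \mapsto w(\O)$ is \emph{injective} on the set of nilpotent orbits of $\g$; this is exactly the content of~\ref{item:N4} (the weighted Dynkin diagram is a complete invariant of a nilpotent orbit, cf.\ \cite[Section 3.5]{CM93}).

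First I would observe that $d\sigma_e(\O)$ is again a nilpotent orbit: $d\sigma_e$ is an $\R$-linear automorphism of $\g$ preserving the Lie bracket up to the obvious antilinearity, hence it sends $\mathrm{ad}$-nilpotent elements to $\mathrm{ad}$-nilpotent elements and permutes the $G$-orbits via the $\star$-action (as used in the proof of Proposition~\ref{prop:first part}). So both $\O$ and $d\sigma_e(\O)$ lie in the domain on which $w(-)$ is defined and injective.

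Then the argument is a one-line biconditional. If $d\sigma_e(\O) = \O$, apply $w$ to both sides and use Proposition~\ref{prop: part_zero} to get $\sigma_D(w(\O)) = w(d\sigma_e(\O)) = w(\O)$. Conversely, if $\sigma_D(w(\O)) = w(\O)$, then Proposition~\ref{prop: part_zero} gives $w(d\sigma_e(\O)) = w(\O)$, and injectivity of the weighted Dynkin diagram invariant (\ref{item:N4}) forces $d\sigma_e(\O) = \O$. This completes the proof.

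There is essentially no obstacle here: the corollary is a formal consequence of the proposition once one invokes injectivity of $w(-)$, which is a standard fact. The only subtlety worth a brief remark is that injectivity of the weighted Dynkin diagram is a statement about a \emph{simple} (or semisimple) Lie algebra, and in the semisimple case one reduces to the simple factors via~\ref{item:N7}; but since the proposition and corollary are stated for an arbitrary complex semisimple $G$, this reduction is compatible and causes no difficulty.
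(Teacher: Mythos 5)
Your proof is correct and follows exactly the paper's route: the authors also obtain the corollary as a direct consequence of Proposition~\ref{prop: part_zero} combined with the injectivity of the weighted Dynkin diagram invariant from~\ref{item:N4}. The extra remarks on $d\sigma_e(\O)$ being a nilpotent orbit and on the reduction to simple factors are harmless elaborations of what the paper leaves implicit.
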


We now determine which nilpotent orbits in $\g$ admit an equivariant real structure (i.e. the nilpotent orbits $\O$ such that $d\sigma_e(\O)=\O$).
Applying Lemma~\ref{lem:easy_lemma_reduction}, we reduce to the case where either 
\begin{enumerate}[label=(\Alph*)]
\item\label{case i} $\g$ is simple; or 
\item\label{case ii} $\g\simeq \l \oplus \l$ with $\l=\Lie(L)$ a simple summand, and $\sigma\colon L \times L \to L \times L$ is  
$(g_1,g_2) \mapsto (\sigma'(g_2),\sigma'(g_1))$, where $\sigma'$ is any real group structure on $L$.
\end{enumerate}

\smallskip

In Case~\ref{case ii}, the nilpotent orbits of $\g$ stabilized by $d\sigma_e$ are the orbits of the form $(\O_1,d\sigma'_e(\O_1))$ with $\O_1$ a nilpotent orbit in $\l$. Hence, only these nilpotent orbits admit a $(G,\sigma)$-equivariant real structure. 

\smallskip

On the other hand, Case~\ref{case i} will be handled by Proposition \ref{prop:fixed_orbits}, but we first need to recall the partition type classification of nilpotent orbits in type $D_n$.

\begin{theorem}\emph{(\cite[Theorem 5.1.4]{CM93})}
Nilpotent orbits in $\so_{2n}$ (type $D_n$ with $n \geq 4$) are parametrized by partitions of $2n$ in which even parts occur with even multiplicity, except that ``very even" partitions $\mathbf{d}=[d_1,\ldots,d_r]$ (those with only even parts, each having even multiplicity) correspond to two orbits, denoted by $\O_{\mathbf{d}}^{I}$ and $\O_{\mathbf{d}}^{I\!I}$.
\end{theorem}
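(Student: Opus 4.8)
The plan is to deduce the classification from the Jordan normal form in $\gl_{2n}$ combined with the representation theory of $\sld$. Any nilpotent $X \in \so_{2n}$ is in particular a nilpotent endomorphism of $\C^{2n}$, hence has a Jordan type $\mathbf{d}=[d_1,\ldots,d_r]$, a partition of $2n$; by Jacobson--Morozov one embeds $X$ in an $\sld$-triple $(X,t,Y)\subseteq\so_{2n}$, and recording $\mathbf{d}$ amounts to recording the isomorphism class $\bigoplus_i V_{d_i}$ of $\C^{2n}$ as an $\sld$-module, where $V_d$ denotes the irreducible of dimension $d$. The defining orthogonal form $q$ on $\C^{2n}$ is $\sld$-invariant, so the question breaks into three parts: (i) which partitions $\mathbf{d}$ admit a nondegenerate \emph{symmetric} $\sld$-invariant form on $\bigoplus_i V_{d_i}$; (ii) whether the Jordan type is a complete invariant for $\mathrm{O}_{2n}$-conjugacy of such data; and (iii) which $\mathrm{O}_{2n}$-orbits split into two $\mathrm{SO}_{2n}$-orbits.

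For (i) the local input is that each $V_d$ carries a one-dimensional space of nonzero invariant bilinear forms, whose generator is symmetric when $d$ is odd and alternating when $d$ is even (it is $\mathrm{Sym}^{d-1}$ of the symplectic form on $V_2$), together with $\Hom_{\sld}(V_d,V_{d'})=0$ for $d\ne d'$ and $\End_{\sld}(V_d)=\C$. Hence any invariant form on $\bigoplus_i V_{d_i}$ is block-diagonal along the isotypic decomposition and, on the component $V_d^{\oplus m_d}$, is the tensor product of the local form with a matrix $B_d\in M_{m_d}(\C)$; the total form is nondegenerate iff every $B_d$ is invertible, and symmetric iff $B_d$ is symmetric for $d$ odd and alternating for $d$ even. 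Since an invertible alternating matrix exists only in even size, this forces exactly the condition ``even parts occur with even multiplicity'', and conversely every such partition occurs (take $B_d=I_{m_d}$ for $d$ odd and a standard invertible alternating matrix for $d$ even).

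For (ii) I would invoke the Krull--Schmidt-type classification of indecomposable $X$-invariant nondegenerate orthogonal $\C$-modules: they are the single Jordan block $V_d$ with its symmetric form for $d$ odd, and the hyperbolic pair $V_d\oplus V_d$ for $d$ even, each appearing with a multiplicity read off from $\mathbf{d}$; uniqueness up to isometry says precisely that two nilpotents of $\so_{2n}$ with the same Jordan type are $\mathrm{O}_{2n}$-conjugate (equivalently, over $\C$ any two nondegenerate symmetric, resp.\ invertible alternating, matrices of a fixed size are congruent, which moves the block data $(B_d)$ to any other). For (iii), an $\mathrm{O}_{2n}$-orbit $\mathcal{O}$ of $X$ breaks into two $\mathrm{SO}_{2n}$-orbits iff $Z_{\mathrm{O}_{2n}}(X)\subseteq\mathrm{SO}_{2n}$. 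One computes that the reductive part of $Z_{\mathrm{O}_{2n}}(X)$ is $\prod_{d\text{ odd}}\mathrm{O}_{m_d}(\C)\times\prod_{d\text{ even}}\Sp_{m_d}(\C)$, realized on the multiplicity spaces, and that an element $g$ of an $\mathrm{O}_{m_d}$-factor acts on $\C^{2n}$ with determinant $\det(g)^d$; thus a reflection in a factor with $d$ odd has determinant $-1$, so $\mathcal{O}$ remains a single $\mathrm{SO}_{2n}$-orbit as soon as $\mathbf{d}$ has an odd part, whereas for a very even $\mathbf{d}$ all centralizer factors are symplectic (hence in $\mathrm{SL}_{2n}$) and the unipotent radical has determinant $1$, so $Z_{\mathrm{O}_{2n}}(X)\subseteq\mathrm{SO}_{2n}$ and $\mathcal{O}=\O_{\mathbf{d}}^{I}\sqcup\O_{\mathbf{d}}^{I\!I}$. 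The hypothesis $n\ge 4$ only serves to ensure that $\so_{2n}$ is simple of type $D_n$.

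I expect the technical heart to be steps (ii) and (iii): the relative Witt/Krull--Schmidt classification of $X$-invariant orthogonal modules, which is what makes the Jordan type a complete invariant, and the precise determination of the centralizer together with the determinants of its elements. By contrast, the isotypic bookkeeping in (i) is routine once the symmetric/alternating dichotomy on each $V_d$ has been recorded.
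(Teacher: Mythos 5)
The paper does not prove this statement --- it is quoted verbatim from \cite[Theorem 5.1.4]{CM93} --- and your sketch is precisely the standard argument of that reference: the symmetric/alternating dichotomy of the invariant form on each $V_d$ forcing ``even parts with even multiplicity,'' congruence of the multiplicity-space forms $B_d$ giving $\mathrm{O}_{2n}$-conjugacy from equality of Jordan types, and the determinant computation $\det(1\otimes g)=\det(g)^d$ on the reductive centralizer $\prod_{d\ \mathrm{odd}}\mathrm{O}_{m_d}\times\prod_{d\ \mathrm{even}}\Sp_{m_d}$ showing that an $\mathrm{O}_{2n}$-orbit splits into two $\mathrm{SO}_{2n}$-orbits exactly for very even partitions. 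Your argument is correct and takes essentially the same route as the cited source.
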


\begin{proposition}\label{prop:fixed_orbits}
Assume that $G$ is simple. Then $d\sigma_e(\O) \neq \O$ if and only if one of the following two conditions holds:
\begin{enumerate}[1.]
\item $G$ is of type $D_4$, $\sigma$ is an outer twist of a split real group structure on $G$ (i.e.~$\g^{d\sigma_e}\simeq \so_{5,3}$ or $\so_{7,1}$), and $\O$ belongs to one of the pairs of orbits swapped by the $\star$-action induced by $\sigma$ (see Example \ref{ex:3.5} below).
\item $G$ is of type $D_{2n}$ with $n \geq 3$, $\sigma$ is an outer twist of a split real group structure on $G$ (i.e.~$\g^{d\sigma_e}\simeq \so_{p,q}$ with $p$ and $q$ odd), and $\O \in \{ \O_{\mathbf{d}}^{I}, \O_{\mathbf{d}}^{I\!I} \}$ with $\mathbf{d}$ a very even partition.
\end{enumerate}
\end{proposition}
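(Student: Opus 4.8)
The plan is to reduce the problem, through Corollary~\ref{cor: part_zero}, to the combinatorial question of when the Dynkin diagram automorphism $\sigma_D$ fixes the weighted Dynkin diagram $w(\O)$: indeed $d\sigma_e(\O)\ne\O$ if and only if $\sigma_D(w(\O))\ne w(\O)$. If $\sigma_D=\mathrm{id}$ --- equivalently, by Remark~\ref{rk: star actions and inner twists} together with the description of quasi-split real group structures recalled before Section~\ref{subsubsec: star-action}, if $\sigma$ is an inner twist of a split real group structure --- then every $w(\O)$ is fixed and $d\sigma_e(\O)=\O$ for all $\O$, so there is nothing to prove. Hence we may assume $\sigma_D\ne\mathrm{id}$; since $G$ is simple this forces $G$ to be of type $A_n$ ($n\ge2$), $D_n$ ($n\ge4$), or $E_6$ (the only simple types with $\Aut(\Dyn(G))\ne1$), and $\sigma$ to be an outer twist of a split real group structure. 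In type $D_n$ the latter condition is equivalent to $\g^{d\sigma_e}\simeq\so_{p,q}$ with $p,q$ odd, by the classification of the real forms of $\so_{2n}(\C)$ (see e.g.\ \cite[Section~1.7.2]{GW09}).

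Next I would settle the ``easy'' types $A_n$ ($n\ge2$), $D_{2k+1}$ ($k\ge2$) and $E_6$. In each of them $\Aut(\Dyn(G))=\Z/2$ is generated by the automorphism induced by $-w_0$, where $w_0$ is the longest element of the Weyl group, so necessarily $\sigma_D=-w_0$. Given any nilpotent orbit $\O$, choose the dominant semisimple element $t\in\lt$ of an $\sl_2$-triple attached to $\O$, as in the proof of Proposition~\ref{prop: part_zero}. Conjugating the triple by a lift of the non-trivial Weyl element of $\SL_2$ sends $t$ to $-t$, so $-t$ lies in the $G$-orbit of $t$, hence in $W\cdot t$; its dominant representative $-w_0(t)$ must therefore coincide with $t$. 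Since the recipe for $w(\O)$ assigns to the node $\alpha$ the value $d\alpha_e(t)$ and $\sigma_D$ corresponds, as in the proof of Proposition~\ref{prop: part_zero}, to the lattice transformation $-w_0$, the equality $-w_0(t)=t$ gives at once $\sigma_D(w(\O))=w(\O)$ for every $\O$. Thus these types --- and in particular all of type $A$, type $E_6$, and type $D_{2k+1}$ --- contribute no orbit with $d\sigma_e(\O)\ne\O$.

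There remains the case $G$ of type $D_{2k}$ ($k\ge2$), where $-w_0=\mathrm{id}$ and $\sigma_D$ is a non-trivial involution of $\Dyn(G)$ distinct from $-w_0$; this is the heart of the argument and I would treat it with the partition classification. Writing $\g=\so_{2n}$ with $n=2k$ and $\O=\O_\lambda$, an $\sl_2$-triple of $\O_\lambda$ has semisimple element $h$ acting on $\C^{2n}$ with eigenvalue multiset $\bigsqcup_j\{\lambda_j-1,\lambda_j-3,\dots,-(\lambda_j-1)\}$, so, writing $h=(h_1\ge\cdots\ge h_{n-1}\ge|h_n|)$ in the usual coordinates, the labels of $w(\O_\lambda)$ on the fork nodes $\alpha_{n-1}=e_{n-1}-e_n$ and $\alpha_n=e_{n-1}+e_n$ are $h_{n-1}-h_n$ and $h_{n-1}+h_n$ respectively. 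For $k\ge3$, $\Aut(\Dyn(G))=\Z/2$ and $\sigma_D$ exchanges $\alpha_{n-1}$ and $\alpha_n$, which on the Cartan amounts to negating the last coordinate of $h$; hence $\sigma_D(w(\O_\lambda))=w(\O_\lambda)$ precisely when the dominant $h$ can be chosen with $h_n=0$, i.e.\ precisely when $0$ occurs in the eigenvalue multiset above (and then with multiplicity $\ge2$, since a partition of $2n$ with even parts of even multiplicity has an even number of odd parts), i.e.\ precisely when $\lambda$ has an odd part, i.e.\ precisely when $\lambda$ is not very even; and for $\lambda$ very even the two orbits $\O_{\mathbf{d}}^{I}$ and $\O_{\mathbf{d}}^{I\!I}$ (the two admissible signs of $h_n$) are exchanged by $\sigma_D$. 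Since a partition of $2n$ into even parts only forces $4\mid 2n$, very even partitions exist exactly when $n$ is even, and so we obtain precisely case~2 (type $D_{2n}$ with $n\ge3$, very even pairs). The type $D_4$ is the main obstacle: there $\Aut(\Dyn(G))=S_3$, so $\sigma_D$ is one of three distinct (though $S_3$-conjugate) transpositions and one must follow the triality action on the nilpotent orbits of $\so_8$. Running the same coordinate computation --- or inspecting the weighted Dynkin diagrams directly --- shows that the only orbits moved by any transposition are the members of the two triality triples $\{[3,1^5],\O_{[2^4]}^{I},\O_{[2^4]}^{I\!I}\}$ and $\{[5,1^3],\O_{[4^2]}^{I},\O_{[4^2]}^{I\!I}\}$, on each of which a transposition fixes one orbit and swaps the remaining two; which pairs a given $\sigma_D$ actually swaps is recorded in Example~\ref{ex:3.5}. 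Assembling all possibilities --- no moved orbit when $\sigma$ is inner, nor in types $A$, $E_6$ and $D_{2k+1}$; the very even pairs in type $D_{2n}$ with $n\ge3$; and one pair from each triality triple in type $D_4$ --- gives the statement of the proposition.
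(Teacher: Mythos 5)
Your argument is correct and follows the same skeleton as the paper's proof: reduce via Corollary~\ref{cor: part_zero} to deciding when $\sigma_D$ fixes $w(\O)$, dispose of the case where $\sigma$ is an inner twist of a split structure using Remark~\ref{rk: star actions and inner twists}, and then run through the simple types with non-trivial $\Aut(\Dyn(G))$. Where you differ is that the paper outsources every combinatorial input to \cite{CM93} (Lemma~3.6.5 for type $A_n$, Lemmas~5.3.4--5.3.5 and Remark~5.3.6 for type $D_n$, Example~5.3.7 for $D_4$, Section~8.4 for $E_6$), whereas you prove those inputs. Your uniform argument for types $A_n$, $D_{2k+1}$ and $E_6$ --- the semisimple element $t$ of an $\sld$-triple is $G$-conjugate to $-t$, hence its dominant representative satisfies $-w_0(t)=t$, and $\sigma_D$ acts as $-w_0$ in exactly these types --- is a clean conceptual replacement for three separate citations, and your partition-type computation of the fork-node labels $h_{n-1}\mp h_n$ in type $D_{2k}$ is sound; in particular the remark that a partition of $2n$ whose even parts have even multiplicity has an even number of odd parts is what guarantees $h_n=0$ is attainable inside the $SO_{2n}$-Weyl chamber. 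The $D_4$ step is asserted rather than computed ("inspecting the weighted Dynkin diagrams directly"), but that is precisely the level of detail of the paper's own appeal to \cite[Example~5.3.7]{CM93}, and the conclusion --- each transposition fixes one member of each triality triple $\{\O_{[5,1^3]},\O_{[4^2]}^{I},\O_{[4^2]}^{I\!I}\}$ and $\{\O_{[3,1^5]},\O_{[2^4]}^{I},\O_{[2^4]}^{I\!I}\}$ and swaps the other two --- is correct. Net effect: same route and same case decomposition, but self-contained where the paper defers to the literature.
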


\begin{proof}
\begin{itemize}
\item If $\sigma$ is an inner twist of a split real group structure on $G$, then the corresponding Dynkin diagram automorphism $\sigma_D$ is trivial (see Remark \ref{rk: star actions and inner twists}). 
Hence, Corollary \ref{cor: part_zero} yields $d\sigma_e(\O)=\O$ in this case.
\item In types $A_n\, (n \geq 2)$, $D_{2n+1}\, (n \geq 2)$ and $E_6$, the weighted Dynkin diagram $w(\O)$ is invariant under the non-trivial Dynkin diagram automorphism; see \cite[Lemma 3.6.5]{CM93} for the type $A_n$, \cite[Lemma 5.3.4 and Remark 5.3.6]{CM93} for the type $D_{2n+1}$, and \cite[Section 8.4]{CM93} for the case $E_6$. Hence, Corollary \ref{cor: part_zero} yields again $d\sigma_e(\O)=\O$ in this case.
\item In type $D_{2n}$ ($n \geq 2$), if $\sigma$ is an outer twist of a split real group structure on $G$, then $\sigma_D$ is a non-trivial involution of $\Dyn(G)$ (see Remark \ref{rk: star actions and inner twists}). In type $D_4$, the group $\Aut(\Dyn(G)) \simeq \mathfrak{S}_3$ contains three (conjugate) involutions, and $\sigma_D$ can be each of them depending on the choice of $\sigma$ in its equivalence class. In type $D_{2n}$ ($n \geq 3$), we have $\Aut(\Dyn(G))=\langle \sigma_D\rangle  \simeq \Z/2\Z$. The result follows then from the description of the weighted Dynkin diagrams given in \cite[Example 5.3.7]{CM93} in type $D_4$ and in \cite[Lemmas 5.3.4 and 5.3.5, Remark 5.3.6]{CM93} in type $D_{2n}$ ($n \geq 3$).
\end{itemize}
\end{proof}

\begin{example}\label{ex:3.5}
A partial order relation is defined on the set of nilpotent orbits of $\g$ by inclusion of closures.
In type $D_4$, the Hasse diagram representing this partial order is the following (see \cite[Section 6.2]{CM93}).
\begin{multicols}{2}
\begin{center}
\scalebox{0.8}{
\xymatrix@R=4mm@C=2cm{
& \O_{[7,1]} \ar@{-}[d] & \\
& \O_{[5,3]} \ar@{-}[rd] \ar@{-}[ld] \ar@{-}[d] &\\
\O_{[4^2]}^{\mathrm{I}} \ar@{-}[rd] \ar@/^3pc/@{<.>}[rr] \ar@{<.>}[r] & \O_{[5,1^3]} \ar@{<.>}[r]\ar@{-}[d]& \O_{[4^2]}^{\mathrm{I\!I}} \ar@{-}[ld]\\
& \O_{[3^2,1^2]} \ar@{-}[d]&  \\
& \O_{[3,2^2,1]} \ar@{-}[ld] \ar@{-}[rd] \ar@{-}[d]&  \\
\O_{[2^4]}^{\mathrm{I}} \ar@/^3pc/@{<.>}[rr] \ar@{<.>}[r]\ar@{-}[rd] & \O_{[3,1^5]} \ar@{<.>}[r]\ar@{-}[d]& \O_{[2^4]}^{\mathrm{I\!I}}  \ar@{-}[ld] \\  
& \O_{[2^2,1^4]}\ar@{-}[d]&  \\
& \O_{[1^8]}&   
      }
            }
\end{center}
Here the dotted arrows indicate which pairs of orbits can be swapped by the $\star$-action induced by an outer twist of a split real group structure on $G$.
\end{multicols}
\end{example}

\section{Uniqueness of real structures on nilpotent orbits}\label{sec:uniqueness}
We keep the notation of Section \ref{sec:existence}.
In this section, we prove that all $(G,\sigma)$-equivariant real structures on $\O$ are equivalent (Theorem \ref{th:second part}).

\smallskip

We recall that, if $X \simeq G/H$ is a homogeneous space, then there is an isomorphism of groups (see \cite[Proposition~1.8]{Tim11}) given by
\[
N_G(H)/H \xrightarrow{\sim} \Aut_\C^{G}(X),\ nH \mapsto (\varphi_{nH}\colon gH \mapsto gn^{-1}H),
\] 
where $N_G(H)$ denotes the normalizer of $H$ in $G$.

Let $\O \simeq \prod_{i \in I} \O_i \simeq G/H$ be a nilpotent orbit in $\g \simeq \bigoplus_{i \in I} \g_i$, and let
$Q:=N_G(H)/H \simeq \Aut_\C^G(\O)$.
The following result, which is essentially due to Brylinski and Kostant \cite{BK91}, describes the structure of the algebraic group $Q$.

\begin{theorem}\label{th:structure of Q}
The linear algebraic group $Q$ is connected and solvable.
Furthermore, the unipotent radical $U$ of $Q$ is of codimension 
\[
d=d(\O):=\Card(\{ i \in I,\ \O_i \neq \{0\}   \})
\] 
in $Q$. In particular, we have $\T:=Q/U \simeq \G_{m,\C}^{d}$ and $Q \simeq U \rtimes \T$.
\end{theorem}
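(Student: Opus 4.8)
The plan is to reduce the statement to the structure of the centralizer of a nilpotent element via an $\sl_2$-triple and then invoke the classical results of Kostant. First I would use property \ref{item:N7} to reduce to the case of a single simple factor, since $Q$ for $\O \simeq \prod_i \O_i$ splits as the product $\prod_i Q_i$, and the codimension statement $d = \Card(\{i : \O_i \neq \{0\}\})$ reduces to checking that each $Q_i$ associated to a nonzero nilpotent orbit $\O_i$ in a simple $\g_i$ has a one-dimensional torus quotient. So it suffices to show: for $\O = G \cdot v$ with $v \in \g$ a nonzero nilpotent element, $G$ semisimple, the group $Q = N_G(H)/H$ (with $H = \Stab_G(v)$) is connected solvable with unipotent radical of codimension $1$.

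Next I would set up the $\sl_2$-triple machinery. Fix an $\sl_2$-triple $(v, t, w)$ with $v$ the nilpotent element (this exists by the Jacobson–Morozov theorem). The centralizer $\g^v$ then admits a grading by the $\mathrm{ad}_t$-eigenvalues, $\g^v = \bigoplus_{k \geq 0} \g^v_k$, and by a theorem of Kostant the reductive part of $\g^v$ is exactly $\g^v_0 \cap \g^w = \g^{(v,t,w)}$, the centralizer of the whole triple, while $\bigoplus_{k \geq 1} \g^v_k$ is the nilradical of $\g^v$. The key point is to identify $\Lie(Q) = \Lie(N_G(H)/H) = \g^H / \lie(H)$ where $\g^H = \{x \in \g : \Ad(h)x = x\ \forall h \in H\}$; since $H$ acts on $\g^v$ (as $H$ centralizes $v$ and preserves the centralizer), and since $H$ is contained in — in fact, by another result of Kostant, the reductive part $\Stab_G(v,t,w)$ of $H$ acts on the grading pieces while the unipotent radical of $H$ acts trivially on $\z(\g^v)$ modulo lower terms — one computes $\Lie(Q) \simeq \z(\g^v)$, the center of $\g^v$. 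By property \ref{item:N5}, $\O \cap \z(\g^v)$ is dense open in $\z(\g^v)$, and $\z(\g^v)$ is the $G$-orbit direction inside a space on which the torus $\exp(t)$-type $\G_m$-action (from property \ref{item:N3}, scalar multiplication, which preserves $\O$) acts with positive weights; this forces $\z(\g^v)$ to be the graded piece structure that makes $Q/U \simeq \G_m$.

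The cleanest route, which I would actually follow, is to cite \cite{BK91} directly for the hard analytic input and only verify the pieces not stated there. Concretely: Brylinski–Kostant show $N_G(H)/H$ is solvable with the stated reductive quotient being a torus $\T$; connectedness follows because $H = \Stab_G(v)$ has the property that its component group is "controlled" — more precisely, by property \ref{item:N6} and the theory of decomposition classes, $N_G(H)/H$ is the group that acts on the linear space $\z(\g^v)$ preserving the open orbit $\O \cap \z(\g^v)$, and a connected solvable group acting with a dense orbit on affine space has the asserted form $U \rtimes \T$ with $\T$ a torus. Then the codimension of $U$ equals $\dim \T = \dim \z(\g^v) - \dim U$; the dimension count $\dim \z(\g^v) - \dim U = 1$ for a nonzero nilpotent in a simple Lie algebra is exactly \cite[Theorem/Corollary]{BK91} on the "$b$-function" / the fact that the Brylinski–Kostant functor produces a one-parameter family — equivalently, it follows from the $\G_m$-action of \ref{item:N3} being nontrivial on $\z(\g^v)$ and the torus $\T$ being generated by this action together with the image of $\exp(\C t)$.

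The main obstacle I expect is pinning down \emph{connectedness} of $Q$ and the precise value of the codimension $d$ without simply black-boxing all of \cite{BK91}: one must argue that the component group $\pi_0(\Stab_G(v))$ injects into (or is killed inside) $N_G(H)/H$ in the right way, and that the torus quotient has dimension exactly equal to the number of nonzero simple-factor components and not more. For this I would lean on the explicit description: $\T$ is the image in $Q$ of a maximal torus of $G$ centralizing the $\sl_2$-triple together with the cocharacter $\lambda$ defining the grading, modulo the cocharacter $\lambda$ itself lying in $H$'s normalizer trivially — so the net contribution per simple factor is one-dimensional, coming from $\lambda$. Assembling this over $I$ via \ref{item:N7} gives $d = \Card(\{i : \O_i \neq 0\})$ and hence $\T \simeq \G_{m,\C}^d$ and $Q \simeq U \rtimes \T$ by the Levi decomposition of a connected solvable linear algebraic group over a field of characteristic zero.
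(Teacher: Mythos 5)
Your overall strategy coincides with the paper's: the solvability of $Q$ and the codimension of its unipotent radical are taken from Brylinski--Kostant, the density statement comes from \ref{item:N5}, and the final decomposition $Q\simeq U\rtimes\T$ is standard structure theory of connected solvable groups in characteristic zero. The $\sl_2$-triple discussion and the identification $\Lie(Q)\simeq\z(\g^v)$ are not needed once you cite \cite{BK91}, and the reduction to simple factors is harmless (the center of $G$ lies in $H$, so $Q$ is computed in the adjoint group).

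The genuine gap is connectedness. As written, your argument is circular: you deduce the structure of $Q$ from the statement that ``a \emph{connected} solvable group acting with a dense orbit on affine space has the asserted form,'' which presupposes the very connectedness you are trying to establish, and the preceding remark that the component group of $\Stab_G(v)$ is ``controlled'' is not an argument --- indeed $\pi_0(\Stab_G(v))$ can be nontrivial, and nothing you say rules out $Q$ having several components. The missing step is \cite[Theorem~D]{BK91}: the group $Q=N_G(H)/H$ acts on $v$ with trivial stabilizer (since $N_G(H)\cap\Stab_G(v)=H$), so $Q\simeq Q\cdot v$ as varieties, and Theorem~D identifies $Q\cdot v$ with $\O\cap\z(\g^v)$, which by \ref{item:N5} is a dense open subset of the vector space $\z(\g^v)$ and hence irreducible. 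Therefore $Q$ is irreducible as a variety and in particular connected. You have all the ingredients on the table ($\O\cap\z(\g^v)$ dense in $\z(\g^v)$, and $Q$ acting there), but you never assemble them into the simply transitive action that makes the connectedness immediate; without that, the claim that the reductive quotient is a torus of dimension exactly $d$ (rather than an extension of a finite group by such a torus) is also not justified.
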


\begin{proof}
Let $v \in \O$ such that $\Stab_G(v)=H$. Then $Q \simeq Q \cdot v$ as algebraic varieties.
Also, by \cite[Theorem~D]{BK91}, we have $Q \cdot v =\O \cap \z(\g^v)$, which is a dense open subset of $\z(\g^v)$ by \ref{item:N5}. Hence, $Q\simeq Q \cdot v$ is irreducible, and so $Q$ is connected.

Now the fact that $Q$ is solvable and that its unipotent radical is of codimension $d$ in $Q$ is precisely the content of \cite[Theorem~23 and Remark~23.1]{BK91}. The last sentence follows from general results on the structure of connected solvable linear algebraic groups (see e.g.~\cite[Section 10.6]{Bor91}).
\end{proof}

\begin{theorem}\label{th:second part}
All $(G,\sigma)$-equivariant real structures on $\O$ are equivalent.
\end{theorem}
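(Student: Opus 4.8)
The strategy is to apply Lemma~\ref{lem:Galois H1 to param eq real structures}: fixing the equivariant real structure $\mu_0 := (d\sigma_e)_{|\O}$ (which exists by Proposition~\ref{prop:first part} once we are in the case $d\sigma_e(\O) = \O$), the set of equivalence classes of $(G,\sigma)$-equivariant real structures on $\O$ is in bijection with the Galois cohomology set $\H^1(\Gamma, \Aut_\C^G(\O))$, where $\Gamma$ acts on $\Aut_\C^G(\O) \simeq Q$ by conjugation by $\mu_0$. So the goal reduces to showing that this pointed cohomology set is trivial. By Theorem~\ref{th:structure of Q}, $Q$ is connected solvable with $Q \simeq U \rtimes \T$, where $U$ is the unipotent radical and $\T \simeq \G_{m,\C}^d$. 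The plan is to exploit this structure via the long exact sequence in nonabelian Galois cohomology associated to $1 \to U \to Q \to \T \to 1$ (after checking this sequence is compatible with the $\Gamma$-action induced by $\mu_0$, i.e.\ that $\mu_0$-conjugation preserves $U$, which it must since $U$ is characteristic in $Q$).

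First I would handle the unipotent part: for a connected unipotent group $U$ over $\C$ equipped with a real structure, $\H^1(\Gamma, U) = 1$. This is standard (e.g.\ $U$ admits a $\Gamma$-stable filtration with successive quotients $\Ga$, and $\H^1(\Gamma, \Ga) = 0$ since $\Ga(\C) = \C$ is a $\Q$-vector space on which $\Gamma$ acts, so the cohomology vanishes; one then climbs the filtration using the exact sequences). Second, the torus part: by the twisting argument, $\H^1(\Gamma, \T)$ for the real structure induced by $\mu_0$-conjugation. Here I would invoke Lemma~\ref{lem: real form on tori}: every real group structure on $\G_{m,\C}^d$ is equivalent to a product of the three standard ones $\sigma_0, \sigma_1, \sigma_2$, and each of these has trivial $\H^1$ — indeed $\H^1(\Gamma, \Gm)$ for $t \mapsto \bar t$ is trivial by Hilbert 90 ($\Br(\R)$ aside, this is $\H^1$ of the torus, which vanishes), for $t \mapsto \bar t^{-1}$ one computes $\H^1 = \R^*/N(\C^*) $ type object — actually I should be more careful: the relevant fact is that for $\G_{m,\C}$ with the real form $\R^{(1)}$ (norm-one torus) the $\H^1$ is trivial, and likewise for the split and the Weil-restriction forms; in all three cases $\H^1(\Gamma, -) = 0$. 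Then the long exact sequence $\H^1(\Gamma, U) \to \H^1(\Gamma, Q) \to \H^1(\Gamma, \T)$ with both outer terms trivial — together with the usual exactness of pointed sets and a twisting argument to control fibers — forces $\H^1(\Gamma, Q) = 1$, hence all equivariant real structures on $\O$ are equivalent.

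There is one subtlety I would address carefully: in a short exact sequence of nonabelian groups, triviality of $\H^1$ of the sub and quotient does not immediately give triviality of $\H^1$ of the middle — one needs that the fiber over the base point of $\H^1(\Gamma,Q) \to \H^1(\Gamma,\T)$ is a single point. The fiber is a quotient of $\H^1(\Gamma, U)$ (or rather of $\H^1(\Gamma, {}_c U)$ for twisted forms ${}_cU$ of $U$ as $c$ ranges over cocycles), so I need $\H^1(\Gamma, {}_cU) = 1$ for all such twists. But any twist of a connected unipotent group by a real structure is again a connected unipotent group over $\C$ with a real structure, so the vanishing argument from the first step applies verbatim to each ${}_cU$. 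Assembling: the fiber over the base point is trivial, and since $\H^1(\Gamma,\T)$ is itself trivial there are no other fibers, so $\H^1(\Gamma, Q)$ is a single point.

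\textbf{Main obstacle.} The main point requiring genuine care — as opposed to routine citation — is verifying that $\mu_0$-conjugation on $Q$ really does restrict to each of the standard factor types on the torus quotient $\T$ in a way compatible with Lemma~\ref{lem: real form on tori}, and more importantly, confirming that \emph{all three} standard real structures on a complex torus have vanishing $\H^1$. For $\sigma_0$ (split) this is Hilbert 90; for $\sigma_2$ (Weil restriction) this follows from Shapiro's lemma, $\H^1(\Gamma, \operatorname{Res}_{\C/\R}\Gm \times_\R \C) \simeq \H^1(1, \Gm) = 1$; the genuinely delicate one is $\sigma_1$ (the norm-one torus $\R^{(1)}_{\C/\R}\Gm$), where $\H^1(\Gamma, \G_{m,\C}) \simeq \R^*/\!\operatorname{Nm}(\C^*) \simeq \R^*/\R_{>0} \simeq \Z/2\Z$ is \emph{nontrivial}. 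This means the naive claim "$\H^1$ of every real torus vanishes" is false, and I would need to argue instead that the real structure on $\T$ induced by $\mu_0$-conjugation has \emph{no} $\sigma_1$-factors — equivalently, that the relevant torus quotient carries a split or Weil-restriction structure only. I expect this to follow from analyzing how $d\sigma_e$ acts on $Q\cdot v = \O \cap \z(\g^v)$ and on the torus $\T$ therein; plausibly the $\G_{m,\C}$-action of (N3) on $\O$ gives a canonical identification under which $d\sigma_e$-conjugation acts by an honest complex conjugation (split type), but pinning this down for each factor is where the real work lies.
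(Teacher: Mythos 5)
Your proposal follows the paper's proof essentially step for step: the same reduction via Lemma~\ref{lem:Galois H1 to param eq real structures} and Theorem~\ref{th:structure of Q}, the same vanishing of $\H^1(\Gamma,U)$ for the unipotent radical, and the same identification of the torus quotient $\T$ as the crux. You correctly flag the one genuine danger --- a $\sigma_1$-factor in $\T$ would contribute a nontrivial $\H^1$ --- and your guess for how to rule it out is exactly what the paper does: it takes $K:=\G_{m,\C}^{d}$ acting by scalar multiplication on each simple summand $\g_i$ with $\O_i\neq\{0\}$ (the action of \ref{item:N3}), which embeds into $Q=\Aut_\C^G(\O)$, and computes the conjugation action of $\mu_0=(d\sigma_e)_{|\O}$ on $K$ directly: on a $d\sigma_e$-stable summand one gets $\mu_0\circ\varphi_t\circ\mu_0=\varphi_{\overline{t}}$, a $\sigma_0$-factor, while on a pair of summands swapped by $d\sigma_e$ one gets $\varphi_{(t_1,t_2)}\mapsto\varphi_{(\overline{t_2},\overline{t_1})}$, a $\sigma_2$-factor. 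This second case is the one your sketch of ``an honest complex conjugation (split type)'' does not quite cover, but as you yourself note $\sigma_2$ also has trivial $\H^1$, and no $\sigma_1$-factor can occur; the triviality of $\H^1(\Gamma,K)$ is then \cite[Proposition~1.18]{MJT18}. The last point to supply is that the projection $\pi\colon Q\to\T$ restricts to a $\Gamma$-equivariant isomorphism $K\isoto\T$ (both are $d$-dimensional tori by Theorem~\ref{th:structure of Q}, and a torus meets the unipotent radical trivially), whence $\H^1(\Gamma,\T)\simeq\H^1(\Gamma,K)$ is trivial. Your handling of the fibers of $\H^1(\Gamma,Q)\to\H^1(\Gamma,\T)$ via twisted forms of $U$ is, if anything, more explicit than the paper's, which simply cites the vanishing of $\H^1$ for unipotent groups and reads off the conclusion from the exact sequence of pointed sets. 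In short: same approach, with the decisive torus computation left as a (correct) conjecture rather than carried out.
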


\begin{proof}
If $\O$ admits no $(G,\sigma)$-equivariant real structure, then there is nothing to prove.
Otherwise, according to Proposition~\ref{prop:first part}, $\mu_0:=(d\sigma_e)_{|\O}$ is a $(G,\sigma)$-equivariant real structure on $\O$. Thus, conjugation by $\mu_0$ yields a $\Gamma$-action on the group $\Aut_\C^G(\O)\simeq Q$, and according to Lemma~\ref{lem:Galois H1 to param eq real structures}, the first Galois cohomology set $\H^1(\Gamma,Q)$ parametrizes the equivalence classes of $(G,\sigma)$-equivariant real structures on $\O$. Therefore proving the theorem is equivalent to prove that this set is a singleton.

Let $K:=\G_{m,\C}^{d}$ acting on $\g \simeq \bigoplus_{i \in I} \g_i$ by scalar multiplication on each simple component $\g_i$ such that $\O_i \neq \{0\}$. 
By \ref{item:N3}, the torus $K$ acts faithfully on $\O$, and so there is an inclusion of algebraic groups $K \hookrightarrow \Aut_\C^G(\O) \simeq Q$. Using the fact that $\mu_0=d\sigma_e$ is an antilinear Lie algebra involution that permutes the $\g_i$, we have the following two possibilities (with the notation of Lemma \ref{lem: real form on tori}): 
\begin{itemize}
\item Assume that $\O_i \neq \{0\}$ and $\mu_0(\g_i)=\g_i$. In this case, $\Gamma$ acts on the corresponding $\G_{m,\C}$ through $\sigma_0$. Indeed, denoting $\varphi_t\colon \g_i \to \g_i, \ v \mapsto t \cdot v$, we see that  \[
\mu_{0|\g_i} \circ \varphi_t \circ \mu_{0| \g_i}=\varphi_{\overline{t}}.
\]
\item Assume that $\O_i, \O_j \neq \{0\}$ and $\mu_0(\g_i \oplus \g_j)=\g_j \oplus \g_i$. Then $\g_i \oplus \g_j \simeq \l \oplus \l$, for some simple Lie algebra $\l$, and $\mu_{0|\g_i \oplus \g_j}$ identifies with $ \l \oplus \l \to \l \oplus \l,\ (v_1,v_2) \mapsto (\psi(v_2),\psi^{-1}(v_1))$, where $\psi$ is an antilinear Lie algebra automorphism. In this case, $\Gamma$ acts on the corresponding $\G_{m,\C}^{2}$ through the real group structure $\sigma_2$. 
Indeed, denoting $\varphi_{(t_1,t_2)} \colon \l \oplus \l \to \l \oplus \l, \ (v_1,v_2) \mapsto (t_1 \cdot v_1, t_2 \cdot v_2)$, we see that  \[
\mu_{0|\g_i \oplus \g_j} \circ \varphi_{(t_1,t_2)} \circ \mu_{0 | \g_i \oplus \g_j}=\varphi_{(\overline{t_2},\overline{t_1})}.
\]
\end{itemize}
Therefore, the $\Gamma$-action on $\Aut_\C^G(\O)$, given by the conjugation by $\mu_0$, stabilizes the subgroup $K$, and the restriction of the $\Gamma$-action on $\Aut_\C^G(\O)$ to $K=\G_{m,\C}^{d}$ coincides with the $\Gamma$-action on $K$ induced by a real group structure equivalent to $\sigma_{0}^{\times n_0} \times \sigma_{2}^{\times n_2}$ for some $n_0,n_2 \in \Z_{\geq 0}$ such that $d=n_0+2n_2$. But then \cite[Proposition~1.18]{MJT18} implies that $\H^1(\Gamma,K)$ is trivial.

Moreover, the homomorphism $Q \to Q,\ q \mapsto \gamma \cdot q$ is an antiregular algebraic  group involution, and so the $\Gamma$-action on $Q$ stabilizes the unipotent radical $U$ of $Q$ and induces a $\Gamma$-action on the quotient $\T=Q/U$ such that the quotient morphism $\pi\colon Q \to \T$ is then $\Gamma$-equivariant. It follows that isomorphism of algebraic groups $\pi_{|K}\colon K \simeq \T$ is $\Gamma$-equivariant. Consequently, it induces an isomorphism of abelian groups between $\H^1(\Gamma,K)$ and $\H^1(\Gamma,\T)$, and so  $\H^1(\Gamma,\T)$ is trivial.

Finally, the exact sequence of algebraic groups given by Theorem~\ref{th:structure of Q}, namely
\[
1 \to U \to Q \to \T \to 1,
\] 
induces a long exact sequence of pointed sets (see \cite[Chp.~I, Section 5.5]{Ser02}). In particular, we have
\[
\H^1(\Gamma,U) \to \H^1(\Gamma,Q) \to \H^1(\Gamma,\T) = \{ e \}.
\]
But $\H^1(\Gamma,U)$ is a singleton by \cite[Chp.~I\!I\!I, Section 2.1, Proposition 6]{Ser02}, and so $\H^1(\Gamma,Q)$ is also a singleton, which concludes the proof of the theorem.
\end{proof}

\section{Extension of real structures to nilpotent orbit closures}\label{sec:extension}
We maintain the notation of Section \ref{sec:existence}.
In this section, we prove that every $(G,\sigma)$-equivariant real structure on $\O$ extends uniquely to $\widetilde{\O}$, and that all $(G,\sigma)$-equivariant real structures on $\widetilde{\O}$ are equivalent (Proposition \ref{prop:third part}).
We then briefly review what is known about the (non-)normality of nilpotent orbit closures in semisimple Lie algebras, and finish this section by considering an example related to our extension problem (Example \ref{ex:example non-normal case}). 

\begin{lemma}\label{lem:real structures always  extend}
Let $X$ be a complex normal affine $G$-variety, and assume that there exists a $G$-stable dense open subset $X_0 \subseteq X$ such that $\codim(X \setminus X_0,X) \geq 2$. Then any $G$-equivariant automorphism of $X_0$ extends uniquely to a $G$-equivariant automorphism of $X$.
\end{lemma}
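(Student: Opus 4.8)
The plan is to reduce the claim to a classical statement about extending morphisms (or sections of the structure sheaf) across a closed subset of codimension $\geq 2$ in a normal variety. First I would observe that it suffices to prove the weaker assertion that any $G$-equivariant \emph{morphism} $X_0 \to X$ extends uniquely to a $G$-equivariant morphism $X \to X$; applying this to a $G$-equivariant automorphism $\phi$ of $X_0$ and to its inverse $\phi^{-1}$, and using the uniqueness of extensions together with the fact that $(\phi \circ \phi^{-1})$ and $(\phi^{-1} \circ \phi)$ extend the identity, one gets that the two extensions are mutually inverse, hence the extension is an automorphism. So the whole problem is an extension-of-morphisms statement, and the $G$-equivariance will come for free once we know the extension is unique (two $G$-translates of an extension agree on the dense open set $X_0$, hence everywhere, since $X$ is a variety — here one invokes \cite[Chapter I, Lemma~4.1]{Har} as already cited in Section~\ref{subsubsec: 2.2.2}).

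Next I would reduce extension of a morphism $X_0 \to X$ to $X$ affine to extension of functions. Since $X$ is affine, a morphism $X \to X$ is the same as a $\C$-algebra homomorphism $\O(X) \to \O(X)$, and more concretely it is determined by the images of a fixed finite set of algebra generators of $\O(X)$; thus it is enough to show that every regular function on $X_0$ extends (uniquely) to a regular function on $X$. That is to say, the restriction map $\O(X) \to \O(X_0)$ is an isomorphism. Uniqueness is clear since $X_0$ is dense in the (reduced, irreducible — or at worst, with $X_0$ dense so that $\O(X) \to \O(X_0)$ is injective) variety $X$; for surjectivity this is exactly Hartogs-type extension: on a normal affine (hence integrally closed) variety $X$, a rational function that is regular away from a closed subset $Z = X \setminus X_0$ of codimension $\geq 2$ is in fact regular everywhere, because $\O(X) = \bigcap_{\mathrm{ht}(\mathfrak{p}) = 1} \O(X)_{\mathfrak{p}}$ for $X$ normal, and none of the height-one primes lies in $Z$. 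This is the algebraic version of the statement that on a normal variety depth conditions force extension across codimension $\geq 2$.

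To organize the write-up I would: (1) state the reduction to showing $\O(X)\xrightarrow{\sim}\O(X_0)$; (2) prove injectivity from density of $X_0$; (3) prove surjectivity using normality, recalling the criterion $\O(X) = \bigcap_{\mathrm{ht}(\mathfrak p)=1}\O(X)_{\mathfrak p}$ and the fact that $\codim(Z,X)\geq 2$ means no such $\mathfrak p$ contains the ideal of $Z$ — so a function regular on $X_0$, viewed as an element of the function field, lies in every such local ring and hence in $\O(X)$; (4) conclude that any $G$-equivariant endomorphism of $X_0$ extends to an endomorphism of $X$ (picking a finite generating set of $\O(X)$, or just noting the functor $X\rightsquigarrow\O(X)$ is fully faithful on affine schemes); (5) deduce the automorphism statement by applying the extension to $\phi$ and $\phi^{-1}$ and using uniqueness, and check $G$-equivariance of the extension by density.

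The main obstacle — really the only nontrivial point — is the surjectivity in step (3), i.e.\ the Hartogs extension across codimension $\geq 2$ on a normal affine variety. Everything else is formal. I should be slightly careful about one technical wrinkle: $X$ is normal but, as stated, perhaps not assumed irreducible — however, a normal variety is a disjoint union of its irreducible components, and the hypothesis that $X_0$ is \emph{dense} together with $\codim(X\setminus X_0,X)\geq 2$ forces $X$ to be irreducible (a component disjoint from $X_0$ would violate density, and the codimension bound rules out degenerate cases), so we may and do assume $X$ irreducible, making the function-field argument go through cleanly. One could alternatively cite a standard reference (e.g.\ \cite[Chapitre~0, (21.13.2)]{EGAIV} style results on depth, or Hartshorne's algebraic Hartogs lemma) instead of reproving step (3) in full.
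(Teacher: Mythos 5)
Your proposal is correct and follows essentially the same route as the paper: both arguments hinge on the Hartogs-type isomorphism $\H^0(X,\O_X)\isoto\H^0(X_0,\O_{X_0})$ coming from normality and $\codim(X\setminus X_0,X)\geq 2$, then use affineness of $X$ to turn the comorphism of $\varphi$ into an endomorphism of $X$ (the paper phrases this via the affinization morphism $X_0\to\Spec(\H^0(X_0,\O_{X_0}))\simeq X$ and $\Spec(\varphi^*)$, you via generators and applying the extension to $\varphi$ and $\varphi^{-1}$), and both obtain $G$-equivariance of the extension by a density argument. No gaps.
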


\begin{proof}
It is clear that if an automorphism extends from $X_0$ to $X$, then this extension is unique by \cite[Chapter I, Lemma~4.1]{Har}.

Since $X$ is normal and $\codim(X \setminus X_0,X) \geq 2$, the map 
\[\H^0(X,\O_X) \to \H^0(X_0,\O_{X_0}),\ f \mapsto f_{|X_0}
\] 
is an isomorphism of $\C$-algebras, and so $\Spec(\H^0(X_0,\O_{X_0})) \simeq \Spec(\H^0(X,\O_X)) \simeq X$.
By \cite[\href{https://stacks.math.columbia.edu/tag/01P9}{Tag 01P9}]{stacks-project}, the affinization morphism 
\[X_0 \to \Spec(\H^0(X_0,\O_{X_0})) \simeq X\] 
is an open immersion; it coincides with the inclusion morphism $X_0 \hookrightarrow X$.

Let $\varphi$ be an automorphism of $X_0$. Then it follows that 
\[
\tilde{\varphi}:=\Spec(\varphi^*) \colon X \simeq \Spec(\H^0(X_0,\O_{X_0})) \to \Spec(\H^0(X_0,\O_{X_0})) \simeq X
\]
is an automorphism of $X$ that extends (uniquely) $\varphi$. Furthermore, the set
\[
\bigcap_{g \in G} \{ x \in X\ | \ \tilde{\varphi}(g \cdot x)=g \cdot \tilde{\varphi}(x)\}
\]
is a closed subset of $X$ containing the dense open subset $X_0$, and so it is equal to $X$. Therefore, $\tilde{\varphi}$ is a $G$-equivariant automorphism of $X$. 
\end{proof}

\begin{proposition}\label{prop:third part}
Every $(G,\sigma)$-equivariant real structure on $\O$ extends uniquely to the normalization $\widetilde{\O}$ of $\overline{\O}$. 
Furthermore, all $(G,\sigma)$-equivariant real structures on $\widetilde{\O}$ are equivalent.
\end{proposition}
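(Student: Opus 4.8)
The plan is to carry everything over to $\widetilde{\O}$ from the dense open orbit $\O$, exploiting that $\O$ fills up $\widetilde{\O}$ in codimension one, so that the affinization argument of Lemma~\ref{lem:real structures always  extend} and the Galois-cohomological bookkeeping behind Theorem~\ref{th:second part} apply with only cosmetic changes. Since any $(G,\sigma)$-equivariant real structure on $\widetilde{\O}$ preserves the unique dense open orbit $\O$ and restricts there, whereas there is nothing to prove when $\O$ carries no such structure, I may assume throughout that $\O$ admits a $(G,\sigma)$-equivariant real structure; by Proposition~\ref{prop:first part} this forces $d\sigma_e(\O)=\O$, so that $\mu_0:=(d\sigma_e)_{|\O}$ is one such structure.

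First I would record the geometry of $\widetilde{\O}$. The closure $\overline{\O}$ is affine (closed in $\g\cong\A^{\dim\g}$), hence so is its normalization $\widetilde{\O}$, and the adjoint $G$-action on $\overline{\O}$ lifts uniquely to $\widetilde{\O}$ by the universal property of normalization; thus $\widetilde{\O}$ is a normal affine $G$-variety. Because $\O$ is smooth, the normalization morphism restricts to an isomorphism over $\O$, so I identify $\O$ with a $G$-stable dense open subset of $\widetilde{\O}$, and $\widetilde{\O}\setminus\O$ maps finitely onto $\overline{\O}\setminus\O$, which is a union of nilpotent orbits of dimension $<\dim\O$, hence (all nilpotent orbits having even dimension by \ref{item:N2}) of dimension $\le\dim\O-2$. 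Therefore $\codim(\widetilde{\O}\setminus\O,\widetilde{\O})\ge2$, and the pair $(\widetilde{\O},\O)$ satisfies the hypotheses of Lemma~\ref{lem:real structures always  extend}.

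Next I would extend real structures. The proof of Lemma~\ref{lem:real structures always  extend} nowhere uses $\C$-linearity, so its affinization argument applies verbatim to antiregular maps and shows that any antiregular self-map of $\O$ extends uniquely to an antiregular self-map of $\widetilde{\O}$. Applied to a $(G,\sigma)$-equivariant real structure $\mu$ on $\O$, this produces an antiregular self-map $\widetilde{\mu}$ of $\widetilde{\O}$ with $\widetilde{\mu}_{|\O}=\mu$; it is an involution because $\widetilde{\mu}^2$ is a regular self-map of the reduced variety $\widetilde{\O}$ restricting to $\mathrm{id}_\O$, hence equals $\mathrm{id}_{\widetilde{\O}}$, and it satisfies \eqref{eq:def eq real structure} because the set $\bigcap_{g\in G}\{x\in\widetilde{\O}\mid\widetilde{\mu}(g\cdot x)=\sigma(g)\cdot\widetilde{\mu}(x)\}$ is closed and contains the dense open $\O$. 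Uniqueness of the extension is \cite[Chapter I, Lemma~4.1]{Har}. In particular $\mu_0$ extends to a $(G,\sigma)$-equivariant real structure $\widetilde{\mu_0}$ on $\widetilde{\O}$, which I take as a reference point; equivalently, $d\sigma_e$ restricts to an antiregular involution of $\overline{\O}$ and lifts to $\widetilde{\O}$ by functoriality of normalization.

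Finally, for the equivalence statement I would argue via Galois cohomology. By Lemma~\ref{lem:real structures always  extend} the restriction homomorphism $\Aut_\C^G(\widetilde{\O})\to\Aut_\C^G(\O)$ is bijective, hence an isomorphism of groups, and it intertwines conjugation by $\widetilde{\mu_0}$ with conjugation by $\mu_0$ (both computed on the dense open $\O$), so it is $\Gamma$-equivariant for the actions appearing in Lemma~\ref{lem:Galois H1 to param eq real structures}. Hence $\H^1(\Gamma,\Aut_\C^G(\widetilde{\O}))\cong\H^1(\Gamma,\Aut_\C^G(\O))$, and the latter set is a singleton by Theorem~\ref{th:second part} together with Lemma~\ref{lem:Galois H1 to param eq real structures} (equivalently, by the argument of its proof, since $\Aut_\C^G(\O)\simeq Q$ of Theorem~\ref{th:structure of Q} and $\H^1(\Gamma,Q)=\{e\}$). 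A last application of Lemma~\ref{lem:Galois H1 to param eq real structures}, now to $\widetilde{\O}$ with reference structure $\widetilde{\mu_0}$, then shows that all $(G,\sigma)$-equivariant real structures on $\widetilde{\O}$ are equivalent. The only step that is not purely formal is the codimension bound $\codim(\overline{\O}\setminus\O,\overline{\O})\ge2$ in the second paragraph: it is exactly what licenses Lemma~\ref{lem:real structures always  extend} to transport automorphisms, real structures, and their conjugation actions between $\O$ and $\widetilde{\O}$, after which the proof reduces to a routine manipulation of antiregular maps together with the density principle.
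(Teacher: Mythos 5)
Your proof is correct and rests on the same pillars as the paper's: the codimension-two bound coming from \ref{item:N2}, Lemma~\ref{lem:real structures always  extend} for transporting $G$-equivariant data from $\O$ to $\widetilde{\O}$, and Theorem~\ref{th:second part} for uniqueness on the open orbit. The one organizational difference is in how the extension of a given $\mu$ is produced: the paper avoids ever extending an antiregular map through the affinization argument, instead writing $\mu=\varphi\circ(d\sigma_e)_{|\O}\circ\varphi^{-1}$ via Theorem~\ref{th:second part}, extending $(d\sigma_e)_{|\O}$ for free (it is the restriction of the globally defined $d\sigma_e$, which descends to $\overline{\O}$ and lifts to $\widetilde{\O}$ by the universal property of normalization) and extending only the regular automorphism $\varphi$ by Lemma~\ref{lem:real structures always  extend}. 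You instead extend $\mu$ directly, which requires the observation that the affinization argument in the lemma is insensitive to $\C$-linearity and works for $\gamma$-semilinear comorphisms; that observation is valid (take $\Spec$ of the semilinear ring automorphism, then recover the involution and equivariance properties by density on the reduced separated scheme), but it is a genuine strengthening of the lemma as stated, so you should record it explicitly rather than cite the lemma as is. Your treatment of the second assertion, via the $\Gamma$-equivariant isomorphism $\Aut_\C^G(\widetilde{\O})\simeq\Aut_\C^G(\O)$ and the induced bijection on $\H^1$, is a clean repackaging of the paper's explicit conjugation argument and buys a slightly more systematic statement at no extra cost.
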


\begin{proof}
If $\mu$ is a $(G,\sigma)$-equivariant real structure on $\O$, then by Theorem~\ref{th:second part}, there exists $\varphi \in \Aut_\C^G(\O)$ such that $\mu=\varphi \circ (d\sigma_e)_{|\O} \circ \varphi^{-1}$. The equivariant real structure $(d\sigma_e)_{|\O}$ is the restriction of $d\sigma_e\colon \g \to \g$, so it extends (uniquely by \cite[Chapter~I, Lemma~4.1]{Har}) to $\overline{\O}$, and also to $\widetilde{\O}$ by the universal property of the normalization morphism (see e.g.~\cite[Section 12.11]{GW09}). With a slight abuse of notation, we will write $(d\sigma_e)_{|\widetilde{\O}}$ to denote the extension of $(d\sigma_e)_{|\O}$ to $\widetilde{\O}$. 

In addition, $\varphi \in \Aut_\C^G(\O)$ also extends (uniquely) to $\tilde{\varphi} \in \Aut_\C^G(\widetilde{\O})$ by Lemma~\ref{lem:real structures always  extend} applied to $X=\widetilde{\O}$ and $X_0=\O$ (here the fact that $\codim(X \setminus X_0,X) \geq 2$ follows from \ref{item:N2}).
Hence, $\tilde{\mu}:=\tilde{\varphi} \circ (d\sigma_e)_{|\widetilde{\O}} \circ \tilde{\varphi}^{-1}$ is a $(G,\sigma)$-equivariant real structure on $\widetilde{\O}$ extending $\mu$. The fact that this extension is uniquely defined by $\mu$ follows again from \cite[Chapter I, Lemma~4.1]{Har}.

It remains to prove the second part of the proposition. 
Let $\widetilde{\mu}$ be any $(G,\sigma)$-equivariant real structure on $\widetilde{\O}$. Then its restriction to $\O$, denoted by $\mu$, is equivalent to $(d\sigma_e)_{|\O}$ by Theorem~\ref{th:second part}.
Hence, there exists $\varphi \in \Aut_\C^G(\O)$ such that $\mu=\varphi \circ (d\sigma_e)_{|\O} \circ \varphi^{-1}$. 
By Lemma~\ref{lem:real structures always  extend}, $\varphi$ extends to $\widetilde{\varphi} \in \Aut_\C^G(\widetilde{\O})$, and so we must have $\widetilde{\mu}=\widetilde{\varphi} \circ (d\sigma_e)_{|\widetilde{\O}} \circ \widetilde{\varphi}^{-1}$. In particular, 
$\widetilde{\mu}$ is equivalent to $(d\sigma_e)_{|\widetilde{\O}}$. Therefore, all $(G,\sigma)$-equivariant real structures on $\widetilde{\O}$ are equivalent.
\end{proof}

\smallskip

The question of which nilpotent orbits in a complex simple Lie algebra have normal closure has been studied by many authors. Let us note that, since a product variety is normal if and only if each factor is normal, the (non-)normality of $\overline{\O} \simeq \prod_{i \in I} \overline{\O}_i$ (see \ref{item:N7}) is therefore easily deduced from the (non-)normality of the $\overline{\O}_i$, which means that we can reduce to the case where $\g$ is simple.   
Here is a list of some known results on this topic, with references:
\begin{itemize}
\item The regular orbit, the subregular orbit, and the minimal orbit have normal closure \cite{Kos63,VP72,Bro94}.
\item Nilpotent orbits in $\sl_n$ have normal closure \cite{Hes79,KP79}. 
\item Nilpotent orbits in $\so_n$ and $\sp_{2n}$ having normal closure are determined in \cite{KP82,Som05}.
\item Nilpotent orbits in $\g_2$, $\f_4$, and $\e_6$ having normal closure are classified in \cite{LS88,Kra89,Bro98b,Som03}. A list of nilpotent orbits in $\e_7$ and $\e_8$ whose closure is non-normal, which is expected to be the complete list, is given in \cite[Section~7.8]{Bro98a}.
\end{itemize}

\bigskip

We now give an example of a quasi-affine surface $X_0$ in $\A_{\C}^{4}$, endowed with a real structure $\mu$ that does not extend to the closure $X$ of $X_0$ in $\A_{\C}^{4}$ but does extend to the normalization $\widetilde{X}$ of $X$.

\begin{example}\label{ex:example non-normal case}
Let $X$ be the image of the morphism
\[
f\colon \A_{\C}^{2} \to \A_{\C}^{4},\ (s,t) \mapsto (s,st,t^2,t^3);
\]
it is a closed non-normal affine surface in $\A_{\C}^{4}$, defined by the prime ideal 
\[(u^2w-v^2,vw-ux,uw^2-vx,w^3-x^2) \ \text{in}\  \C[u,v,w,x].
\]
One can check that $X$ has an isolated singularity at $p:=(0,0,0,0)$, and that $f$ restricts to an isomorphism $\A_{\C}^{2} \setminus \{(0,0)\} \to X_0:=X \setminus p$. Moreover, the normalization of $X$, which coincides with the affinization of $X_0$, is $\widetilde{X}=\A_{\C}^{2}$, and the normalization morphism corresponds to the inclusion of $\C$-algebras $\C[y,yz,z^2,z^3] \subseteq \C[y,z]$.

Let $\tilde{\mu}$ be the real structure on $\widetilde{X}$ given by $(s,t) \mapsto (\overline{t},\overline{s})$. It restricts to a real structure $\mu$ on $X_0$ (as $(0,0)$ is fixed by $\widetilde{\mu})$. But the corresponding comorphism  
\[
\mu^*\colon \C[y,z] \to \C[y,z],\ Q(y,z) \mapsto \overline{Q}(z,y)
\]
does not preserve $\C[y,yz,z^2,z^3]$ (as $\mu^*$ exchanges $y$ and $z$), and so $\mu$ does not extend to a real structure on $X$.
\end{example}

We do not know whether a similar phenomenon can occur when $X_0$ is a nilpotent orbit with non-normal closure. 
More precisely, by the results we have proven, we know that any $(G,\sigma)$-equivariant real structure on a nilpotent orbit is equivalent to one that extends (whether the closure of the orbit is normal or not), namely $d\sigma_e$. However, in the case where the nilpotent orbit does not have a normal closure, it is \textit{a priori} possible that we have two equivalent real structures, one that extends, and one that does not, or even two that extend to inequivalent real structures.

\section{Back to Example \ref{ex:non-gloabl2}}\label{sec:example}
In this last section, we justify that the map $\mu$, given in Example \ref{ex:non-gloabl2}, is a $(G,\sigma)$-equivariant real structure on $\O_{reg}$ that does not extend to a $(G,\sigma)$-equivariant real structure on $\sl_3$. (Of course, it follows from our Main Theorem that $\mu$ is equivalent to a $(G,\sigma)$-equivariant real structure that does extend to $\g$, namely $(d\sigma_e)_{|\O_{reg}}$, but we will see that this does not imply that $\mu$ itself extends to $\g$.)

\smallskip

Recall that, in this example, $G=\SL_3(\C)$, and $\sigma(g)=\overline{g}$ for all $g \in G$. To check that $\mu$ is indeed a $(G,\sigma)$-equivariant real structure on $\O_{reg}$, it suffices to check that the element $g_0:=\begin{bmatrix}
1 & i & 0\\ 0 & 1 & 0\\0 & 0 & 1
\end{bmatrix}$ satisfies the two conditions of Lemma \ref{lem: two conditions} with 
\[
H:=\Stab_G\left(\begin{bmatrix}
0 & 1 & 0\\ 0 & 0 & 1\\0 & 0 & 0
\end{bmatrix}\right)=\left\{ \begin{bmatrix}
a & b & c\\ 0 & a & b\\0 & 0 & a
\end{bmatrix};\ a,b,c \in \C \ \text{with}\ a^3=1\right\},
\]
which is a straightforward computation left to the reader.
 
Let $\mu_0:=(d\sigma_e)_{|\O_{reg}}$, and let $\varphi:=\mu_0 \circ \mu \in \Aut_\C^G(\O_{reg})$. 
We check that 
\[
\varphi\colon 
\O_{reg} \to \O_{reg},\ g \cdot \begin{bmatrix}
0 & 1 & 0 \\ 0 & 0 & 1\\ 0 &0&0
\end{bmatrix} \mapsto \left( g \begin{bmatrix}
1 & -i & 0 \\ 0 & 1 & 0\\ 0 &0&1
\end{bmatrix}  \right) \cdot \begin{bmatrix}
0 & 1 & 0 \\ 0 & 0 & 1\\ 0 &0&0
\end{bmatrix} = 
g  \cdot \begin{bmatrix}
0 & 1 & -i \\ 0 & 0 & 1\\ 0 &0&0
\end{bmatrix}. 
\]
Since $\mu_0$ is the restriction of $d\sigma_e\colon \sl_3 \to \sl_3$, we see that $\mu$ extends to a $(G,\sigma)$-equivariant real structure on $\sl_3$ if and only if there exists $\Phi \in \Aut_\C^G(\sl_3)$ such that $\Phi_{|\O_{reg}}=\varphi$.

Suppose that such a $\Phi$ exists. Then,  for any $u\in\sl_3$, the isotropy subgroups of $u$ and $\Phi(u)$ are the same. 
Let $u_{a,b}:= \begin{bmatrix}
a & b & 0 \\ 0 & a & 0\\ 0 &0&-2a
\end{bmatrix}$ with $a,b\in\C^*$.
Then 
 $
\left\{ \begin{bmatrix}
\alpha & \beta & 0 \\ 0 & \alpha & 0\\ 0 &0&\alpha^{-2}\\
\end{bmatrix}\ \middle|\ \alpha\in\C^*,\beta\in\C \right\}$
is the isotropy subgroup of $u_{a,b}$ in $G$.
Moreover, we check that $\{u_{a,b}\ | \ a,b\in\C^*\}$ are the only elements in $\sl_3$ with this isotropy subgroup, and so this set must be stable by $\Phi$. 
 
Let $f_1,f_2$ be two $\C$-scheme endomorphisms of $\A_\C^1 \setminus \{0\}$ such that $\Phi(u_{a,1})=u_{f_1(a),f_2(a)}$. It is well-known that $f_i(a)=\nu_i a^{\epsilon_i}$ with $\nu_i \in \C^*$ and $\epsilon_i \in \Z$. Let $w_a:=\begin{bmatrix} a & 1 & 0 \\ 0 & a & 1 \\ 0 & 0 & -2a
\end{bmatrix}$ and let $g_a:=\frac{1}{3}\begin{bmatrix}
a^{-1} & 0 & 3 \\ 0 &a^{-1}& -9a \\ 0 & 0 & 27a^2 
\end{bmatrix}$. Then we check that $w_a=g_a \cdot u_{a,1}$, and thus
\begin{align*}
\begin{bmatrix}
0 & 1 & -i \\ 0 & 0 & 1\\ 0 &0&0
\end{bmatrix} &=\Phi\left( \begin{bmatrix}
0 & 1 & 0 \\ 0 & 0 & 1\\ 0 &0&0
\end{bmatrix}  \right)=\Phi\left( \lim_{a \to 0} w_a \right)=\lim_{a \to 0} \Phi\left(  w_a \right)
=\lim_{a \to 0} \left( g_a \cdot \Phi\left(  u_{a,1} \right) \right)\\
&=\lim_{a \to 0} \left( g_a \cdot u_{f_1(a),f_2(a)} \right)=\lim_{a \to 0} \begin{bmatrix}
 \nu_1 a^{\epsilon_1}& \nu_2 a^{\epsilon_2}& \frac{1}{3}(\nu_2 a^{\epsilon_2-1} -\nu_1 a^{\epsilon_1-2})\\
 0  &  \nu_1 a^{\epsilon_1} & \nu_1 a^{\epsilon_1-1} \\
 0 &0 & -2\nu_1 a^{\epsilon_1}
\end{bmatrix}.
\end{align*}
This implies readily that $\epsilon_1=1, \epsilon_2=0, \nu_1=\nu_2=1$, but then $\nu_2 a^{\epsilon_2-1} -\nu_1 a^{\epsilon_1-2}=0$, which is absurd. Hence, there is no $\Phi \in \Aut_\C^G(\sl_3)$ such that $\Phi_{|\O_{reg}}=\varphi$, and so the $(G,\sigma)$-equivariant real structure $\mu$ given in Example \ref{ex:non-gloabl2} is not obtained by restricting a $(G,\sigma)$-equivariant real structure from $\sl_3$ to $\O_{reg}$.

\bibliographystyle{alpha}
\bibliography{biblio}

\end{document}